\documentclass[
a4paper,
11pt,
DIV=14,
toc=bibliography, 
headsepline, 
parskip=half-,
abstract=true,
]{scrartcl}

\usepackage[centertags]{amsmath}
\usepackage[utf8]{inputenc}
\usepackage{enumerate,amsfonts,amssymb,amsthm,amsopn,cite,array,comment}

\usepackage[usenames]{color}
\definecolor{citegreen}{rgb}{0,0.6,0}
\definecolor{refred}{rgb}{0.8,0,0}
\usepackage[colorlinks, citecolor=citegreen, linkcolor=refred, urlcolor=black]{hyperref}

\title{A Local Singularity Analysis for the Ricci Flow and its Applications to Ricci Flows with Bounded Scalar Curvature -- Part II}
\author{Reto Buzano and Gianmichele Di Matteo}
\date{}

\newtheorem{theorem}{Theorem}[section]
\newtheorem{lemma}[theorem]{Lemma}
\newtheorem{corollary}[theorem]{Corollary}
\newtheorem{proposition}[theorem]{Proposition}

\theoremstyle{definition}
\newtheorem{remark}[theorem]{Remark}

\newtheorem{definition}[theorem]{Definition}
\numberwithin{equation}{section}

\newcommand{\mc}{\mathcal}

\providecommand{\set}[1]{\{ {#1} \}}
\providecommand{\abs}[1]{\lvert #1\rvert}
\providecommand{\norm}[1]{\lVert #1\rVert}

\DeclareMathOperator{\supp}{supp}
\DeclareMathOperator{\diam}{diam}

\DeclareMathOperator{\Sc}{R}
\DeclareMathOperator{\Ric}{Ric}
\DeclareMathOperator{\Rm}{Rm}

\DeclareMathOperator{\Var}{Var}

\newcommand\printaddress{{
\setlength{\parindent}{15pt}
\footnotesize~
\par
{\scshape Reto Buzano}
\newline 
Universit\`a di Torino, 
Dipartimento di Matematica,
Via Carlo Alberto 10, 
10123 Torino, Italy 
\newline
\emph{E-mail address:} 
\texttt{reto.buzano@unito.it}
\par
{\scshape Gianmichele Di Matteo}
\newline 
Scuola Superiore Meridionale, 
Largo San Marcellino 10, 
80138 Napoli, Italy
\newline
\emph{E-mail address:} 
\texttt{g.dimatteo@ssmeridionale.it}
\par
}}

\begin{document}
\maketitle
\begin{abstract}
We continue our local singularity analysis for Ricci flow initiated in \cite{BuDM}. Building on that framework, we study Type~I singular points in general Ricci flows, without assuming any global Type~I curvature bound, and prove that the scalar curvature must blow up at a Type~I rate at each such point in all dimensions. As a consequence, Ricci flows with bounded scalar curvature cannot develop Type~I singular points. This extends earlier results of the first author with Enders and Topping \cite{EMT11} and with Mantegazza \cite{MM15} that relied on a global Type~I assumption. We then adapt the same local perspective to ancient Ricci flows and analyse the curvature behaviour as $t \to -\infty$, showing in particular that every ancient Type~I point exhibits scalar curvature behaviour of ancient Type~I order.
\end{abstract}


\section{Introduction and Main Results}

\subsection{Finite Time Singularities}

In this article, a Ricci flow on an interval $I \subseteq \mathbb{R}$ is a smooth family $(M^n,g(t))_{t\in I}$ of complete $n$-dimensional Riemannian manifolds evolving by $\partial_t g(t) = -2\Ric_{g(t)}$. If $M$ is not closed, we will always assume that it has bounded curvature on compact time-intervals, i.e. 
\begin{equation}\label{eq.bddcurv}
\forall J \subseteq I \text{ compact }\, \exists C<\infty \text{ such that } \abs{\Rm} \leq C \text{ on } M \times J.
\end{equation}

A Ricci flow on $I=[0,T)$ develops a singularity at time $T<\infty$ if it cannot be smoothly extended past $T$. If $M$ is closed or satisfies \eqref{eq.bddcurv}, by Hamilton \cite{Ham82} and Shi \cite{Shi89}, this is equivalent to
\begin{equation}\label{eq.curvatureblowup}
\limsup_{t\nearrow T}\,\sup_M\abs{\Rm(\cdot,t)}_{g(t)}=\infty
\end{equation}
and by Sesum \cite{Ses05}, it is in fact equivalent to $\limsup_{t\nearrow T}\,\sup_M\abs{\Ric(\cdot,t)}_{g(t)}=\infty$.

Hamilton \cite{Ham95} introduced two different \emph{global} notions of finite time singularities, called Type~I and Type~II singularities, by considering the rate at which the curvature blow-up in \eqref{eq.curvatureblowup} happens. In \cite{BuDM}, we introduced \emph{local} versions of these notions, studying blow-up rates locally around given points, see Definition \ref{def.singpoints} below. In the first part of this article, we continue this analysis by investigating Type~I singular points inside Ricci flows satisfying \eqref{eq.bddcurv} and \eqref{eq.curvatureblowup}, but no global Type~I curvature bound.

It was proved by the first author with Enders and Topping \cite{EMT11}, see also \cite{MM15} for an alternative proof, that under a global Type~I assumption, at every singular point one can take a smooth blow-up limit to a smooth and non-trivial Ricci shrinker. Since these non-trivial Ricci shrinkers have positive scalar curvature, a direct corollary of this is that the scalar curvature of the original Type~I flow must blow-up at a Type~I rate as well. It is natural to expect that this result also holds for Type~I singular points in a Ricci flow that does not satisfy a global curvature assumption. In this article, we confirm this expectation.

\begin{theorem}[Scalar Curvature Blow-up at Type~I Singular Points]\label{mainthm}
Let $(M,g(t))_{[0,T)}$ be a complete Ricci flow satisfying \eqref{eq.bddcurv} and \eqref{eq.curvatureblowup}. Let $p \in \Sigma_I$ be a Type~I point in the sense of Definition \ref{def.singpoints} below. Then there exist a constant $c_0>0$ and a time $T_0 \in [0,T)$ such that the scalar curvature satisfies $\Sc(p,t)\geq \tfrac{c_0}{T-t}$ for all $t \in [T_0,T)$.
\end{theorem}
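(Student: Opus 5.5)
We argue by contradiction through a blow-up sequence. The approach is to rescale the flow around $p$ at its natural parabolic scale, use the local Type~I structure from \cite{BuDM} to extract a limiting shrinker, and then contradict the failure of the scalar curvature bound.

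\textbf{Setup.} Suppose the conclusion fails. Then there are times $t_i \nearrow T$ with $(T-t_i)\Sc(p,t_i) \to 0$; since $\Sc$ is bounded below by $-n/(2t)$, the quantity $(T-t_i)\Sc(p,t_i)$ is eventually at least $-o(1)$. Set $\lambda_i = (T-t_i)^{-1}$ and define the rescaled flows
\[
g_i(s) = \lambda_i\, g(T + s/\lambda_i),\qquad s\in[-\lambda_i T,0),
\]
based at $p$. At $s=-1$ these satisfy $\Sc_{g_i}(p,-1) = (T-t_i)\Sc(p,t_i) \to 0$.

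\textbf{Step 1: a local Type~I bound.} By Definition \ref{def.singpoints} and the results of \cite{BuDM}, the point $p\in\Sigma_I$ admits a local Type~I curvature bound. Concretely, there are $C,\rho>0$ with
\[
\abs{\Rm}(x,t)\le \frac{C}{T-t}\quad\text{for } d_{g(t)}(x,p)\le \rho\sqrt{T-t}.
\]
If the definition gives this bound only along $p$, or only in a weaker averaged form, I would first upgrade it to such a parabolic neighbourhood. For this I would use Perelman's pseudolocality and no-local-collapsing, via the local reduced-volume analysis of \cite{BuDM}. For the rescaled flows $g_i$ this yields uniform curvature bounds on parabolic balls $B_{g_i(s)}(p,\rho\sqrt{-s})\times[-2,-1/2]$.

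\textbf{Step 2: extracting a shrinker.} Using Perelman's reduced volume based at $(p,T)$, defined via limits of the reduced distances based at $(p,t)$ with $t\nearrow T$ (this is the construction of \cite{EMT11}, localized as in \cite{BuDM}), one obtains the following. The flows $g_i$ subconverge smoothly on the region where the local bounds of Step 1 hold to a gradient shrinking soliton $(M_\infty,g_\infty(s),f_\infty)$. Moreover, $p$ being a singular point, rather than a regular one, means that this limit is non-flat, since the reduced volume limit is $<1$. The main obstacle is right here. Without a global Type~I bound, the convergence is only local near $p$, and one must ensure the limiting shrinker is complete and non-flat rather than merely a local piece. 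I would first try to use the $\mathcal{L}$-geometry estimates of \cite{BuDM} to show that the reduced distance minimizers stay in the controlled region, so that the shrinker identity $\Ric+\nabla^2 f = g/(2(-s))$ holds on the whole limiting space. I would also try to show that the region of control grows, i.e.\ that $\rho$ can be taken arbitrarily large along the sequence.

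\textbf{Step 3: contradiction.} On the limit, $\Sc_{g_\infty}(p_\infty,-1)=\lim_i \Sc_{g_i}(p,-1)=0$. For a complete gradient shrinker one has $\Sc\ge 0$, and by the strong maximum principle applied to $\Delta_f \Sc = \Sc - 2\abs{\Ric}^2$, either $\Sc>0$ everywhere or $\Sc\equiv 0$. The latter forces $\Ric\equiv0$, hence the Gaussian soliton, i.e.\ a flat limit. This contradicts non-flatness. Therefore $\liminf_{t\to T}(T-t)\Sc(p,t)>0$, which gives constants $c_0$ and $T_0$ as claimed.

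If completeness of the limit cannot be secured, the fallback is to prove the local statement directly. On the incomplete limit, $\Sc(p_\infty)=0$ with $\Sc\ge0$ near $p_\infty$ (from the local pinching of \cite{BuDM}) forces $\Ric\equiv0$ near $p_\infty$ by the local maximum principle. By analyticity of the shrinker, this makes the limit flat on the connected component, again contradicting non-flatness.
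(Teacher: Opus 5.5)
There is a genuine gap, and it sits exactly where you locate the ``main obstacle''. Your setup and Step~1 are fine: the bound $\abs{\Rm}\le 2C_I/(T-t)$ on $\mathcal{P}(p,t,\sqrt{(T-t)/(2C_I)})$ follows directly from the definition of the Riemann scale, so no pseudolocality is needed. Step~2, however, is only a list of things you would try.

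Smooth convergence on a region of fixed size $\rho\sqrt{-s}$ around $p$ gives you only a local Ricci flow. The soliton identity is a global statement (the equality case of reduced-volume or entropy monotonicity). The arguments of \cite{EMT11,MM15} that produce it, and the non-flatness via ``reduced volume $<1$'', rely on the global Type~I bound to control $\ell_{(p,T)}$, respectively the singular-time conjugate heat kernel, on all of $M$. Your hope that $\rho$ can be taken arbitrarily large is also unavailable. Nothing in the hypotheses prevents the tangent flow from being a \emph{singular} shrinker with singular points at bounded distance from the basepoint, and then the region of control cannot grow.

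The paper resolves this with Bamler's $\mathbb{F}$-compactness. After checking $\widetilde{M}_T\subseteq M_T$, the tangent flow of the pairs $(g^i,\widetilde{\nu}_{(p,T);\cdot})$ is a metric soliton satisfying the shrinker equation and $\Sc\ge 0$ on its regular part $\mathcal{R}$. The ingredient you are missing entirely is control of where $p$ goes in that limit:
\begin{enumerate}[i)]
\item The paper shows $(p,s)$ is an $H$-center of $(p,T)$. It bounds the reduced length along the constant path $\gamma\equiv p$ using $\Sc(p,\tau)\le C/(T-\tau)$, then applies Perelman's Harnack inequality and the Li--Wang argument.
\item It proves the tangent flow is continuous at every time, so $\mathbb{F}$-convergence is time-wise at every $t_0$.
\item Together these give pointed Gromov--Hausdorff convergence based at $p$. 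The local Type~I bound then places the limit basepoint in $\mathcal{R}$.
\end{enumerate}

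Step~3 and your fallback also break down in this singular setting. From $\Sc^\infty=0$ at a regular point you get, via the minimum principle, real-analyticity (\cite{Kot13}), connectedness of $\mathcal{R}_X$ and self-similarity, only $\Ric^\infty\equiv 0$ on $\mathcal{R}$, not flatness. Ricci-flat cones $C(L)$ over Einstein manifolds with $\Ric_L=(n-2)g_L$ are non-flat singular shrinkers with $f=r^2/4$. So ``Ricci-flat on the component, hence flat'' is false, and in any case you have not established the non-flatness you want to contradict.

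The paper needs neither. Ricci-flatness of $\mathcal{R}$ makes the tangent flow static, $g^\infty_t=g_X$, so the basepoint is a fixed $z_0\in\mathcal{R}_X$. The scale-invariant two-sided bound $1/\abs{t}\le r^{-2}_{\Rm,g^i}(p,t)\le 2C_I/\abs{t}$ (the lower bound is \eqref{eq.singularstrong}) passes to the limit. This contradicts the time-independence of $r_{\Rm^\infty}(z_0,\cdot)$ as $t_0\to 0$.
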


To put Theorem \ref{mainthm} into context, let us collect some important definitions and results from our first article \cite{BuDM}. We start with our definition of Riemann scale and with our local notions of Type~I and Type~II singular points.

\begin{definition}[Parabolic Cylinders, Riemann Scale, and Singular Points]\label{def.Riemannscale}
Let $(M,g(t))_{t\in I}$ be a Ricci flow on $I$ satisfying \eqref{eq.bddcurv} and let $(p,t) \in M \times I$ be a space-time point. For $r>0$, we define the \emph{parabolic cylinder} $\mathcal{P}(p,t,r)$ with center $(p,t)$ and radius $r$ by
\begin{equation*}
\mathcal{P}(p,t,r) := B_{g(t)}(p,r)\times \{ (t-r^2, t+r^2)\cap I \}.
\end{equation*}
We then define the \emph{Riemann scale} $r_{\Rm}(p,t)$ at (p,t) by
\begin{equation*}
r_{\Rm}(p,t) := \sup \{ r>0 \mid \abs{\Rm}<r^{-2} \text{ on } \mathcal{P}(p,t,r) \}.
\end{equation*}
If $(M,g(t))$ is flat for every $t\in I$, we set $r_{\Rm}(p,t)=+\infty$. If $I=[0,T)$, $T<\infty$, and \eqref{eq.curvatureblowup} holds, then we call a point $p \in M$ a \emph{singular point} if 
\begin{equation}\label{eq.singular}
\limsup_{t\nearrow T}\, r_{\Rm}^{-2}(p,t)=\infty.
\end{equation}
We write $\Sigma$ for the set of all singular points.
\end{definition}

\begin{remark} One can show that a point $p \in M$ is a singular point in the above sense if and only if for any neighbourhood $U$ of $p$, the Riemannian curvature becomes unbounded on $U$ as $t \nearrow T$, see Theorem 1.4 in \cite{BuDM}, thus our Definition is equivalent to Definition 1.5 in \cite{EMT11}. Moreover, by Corollary 2.4 in \cite{BuDM}, if $p$ is a singular point, then we can actually strengthen \eqref{eq.singular} to 
\begin{equation}\label{eq.singularstrong}
\liminf_{t\nearrow T} (T-t)r_{\Rm}^{-2}(p,t) > 1.
\end{equation}
It therefore makes sense to consider the following two types of singular points.
\end{remark}

\begin{definition}[Type~I and Type~II Singular Points]\label{def.singpoints}
Let $(M,g(t))_{t\in [0,T)}$ be a Ricci flow with $T<\infty$ satisfying \eqref{eq.bddcurv} and \eqref{eq.curvatureblowup}.
\begin{enumerate}[i)]
\item A point $p \in M$ is a \emph{Type~I singular point} if there exist constants $0<c_I \leq C_I<\infty$ such that
\begin{equation}\label{eq.typeI}
c_I \leq \limsup_{t\nearrow T}\, (T-t) r_{\Rm}^{-2}(p,t) \leq C_I.
\end{equation}
The set of all such points is denoted by $\Sigma_I$.
\item We say that a point $p$ is a \emph{Type~II singular point} if 
\begin{equation}\label{eq.typeII}
\limsup_{t\nearrow T}\, (T-t) r_{\Rm}^{-2}(p,t)=\infty
\end{equation}
and write $\Sigma_{II}$ for the set of all such points.
\end{enumerate}
\end{definition}

\begin{remark} See Definition~1.1 and Theorem~1.4 in \cite{BuDM} for equivalent definitions of Type~I and Type~II points that use the parabolically rescaled Ricci flows $\widetilde{g}_t(s):= (T-t)^{-1}g(t+(T-t)s)$ rather than the Riemann scale. Moreover, let us remark that the parabolic neighbourhoods involved in this Definition may collapse geometrically, hence the Definition captures behaviours between local and pointwise regimes.
\end{remark}

Analogously to Definitions \ref{def.Riemannscale}, we also define the \emph{Ricci scale} $r_{\Ric}(p,t)$ at $(p,t)$ by
\begin{equation*}
r_{\Ric}(p,t) := \sup \{ r>0 \mid \abs{\Ric}<a_0(n)r^{-2} \text{ on } \mathcal{P}(p,t,r) \},
\end{equation*}
where $a_0=a_0(n):=\sqrt{n}(n-1)$. This normalisation guarantees the relation $r_{\Rm}(p,t) \leq r_{\Ric}(p,t)$. If $(M,g(t))$ is Ricci-flat for every $t\in[0,T)$, we set $r_{\Ric}(p,t)=+\infty$. Replacing the Riemann scale with the Ricci scale in \eqref{eq.singular}, \eqref{eq.typeI}, and \eqref{eq.typeII} we also get notions of \emph{Ricci singular points} $\Sigma^{\Ric}$, \emph{Ricci Type~I singular points} $\Sigma_I^{\Ric}$, and \emph{Ricci Type~II singular points} $\Sigma_{II}^{\Ric}$, respectively.

Under a global Type~I assumption, one can show that $\Sigma = \Sigma_I = \Sigma_I^{\Ric}$, see e.g.~Theorem $3.2$ in \cite{EMT11}. In \cite{BuDM}, we proved the following results for general Ricci flows.

\begin{proposition}[Theorems 1.2, 1.9, and Corollary 4.10 in \cite{BuDM}]
Let $(M,g(t))$ be a Ricci flow satisfying \eqref{eq.bddcurv} and \eqref{eq.curvatureblowup}. Then the following hold.
\begin{enumerate}[i)]
\item $\Sigma=\Sigma_I \cup \Sigma_{II}$,
\item $\Sigma^{\Ric} = \Sigma_I^{\Ric} \cup \Sigma_{II}^{\Ric}$,
\item $\Sigma = \Sigma^{\Ric}$.
\end{enumerate}
\end{proposition}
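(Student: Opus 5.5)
The statement collects three results of \cite{BuDM}. The plan is to reduce all three to one local extension principle: if the curvature is bounded on a fixed space-time neighbourhood $U\times[t_0,T)$ of $p$, then $p\notin\Sigma$. This principle is the content of the characterisation recalled in the Remark after Definition~\ref{def.Riemannscale}. I would then do the bookkeeping with the Riemann and Ricci scales.

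\emph{Step 1: part i).} Let $p\in\Sigma$. I first show $\liminf_{t\nearrow T}(T-t)r_{\Rm}^{-2}(p,t)\geq 1$, which is a weak form of \eqref{eq.singularstrong}. Suppose instead that $r_{\Rm}(p,t)^2>T-t$ for some $t$ close to $T$. Choose $r$ with $T-t<r^2<r_{\Rm}(p,t)^2$. Then $\abs{\Rm}<r^{-2}$ on $B_{g(t)}(p,r)\times(t-r^2,T)$, because $t+r^2>T$ means the cylinder reaches the final time. So the curvature is bounded on a fixed open neighbourhood of $p$ up to time $T$. By the characterisation in the Remark, $p$ is then not singular, which is a contradiction. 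Hence $\limsup_{t\nearrow T}(T-t)r_{\Rm}^{-2}(p,t)\geq1$. This $\limsup$ is either finite, giving \eqref{eq.typeI} with $c_I=1$, or infinite, giving \eqref{eq.typeII}. Conversely, both \eqref{eq.typeI} and \eqref{eq.typeII} force $r_{\Rm}^{-2}(p,t)\to\infty$ along a sequence $t\nearrow T$, since $T-t\to0$. So $\Sigma_I\cup\Sigma_{II}\subseteq\Sigma$, and part i) follows.

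\emph{Step 2: parts ii) and iii).} The inclusion $\Sigma^{\Ric}\subseteq\Sigma$ is immediate from $r_{\Rm}\leq r_{\Ric}$. Part ii) follows by repeating Step 1 with the Ricci scale, provided one has a Ricci version of the extension principle: bounded $\abs{\Ric}$ on $U\times[t_0,T)$ implies bounded $\abs{\Rm}$ on a smaller neighbourhood. That same statement gives $\Sigma\subseteq\Sigma^{\Ric}$, and hence part iii).

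\emph{Step 3: the main obstacle, a local Sesum theorem.} I would localise Sesum's argument \cite{Ses05}.
\begin{itemize}
\item Assume $\abs{\Ric}\leq K$ on $U\times[t_0,T)$. Then the metrics $g(t)$ are uniformly bi-Lipschitz on $U$ and converge in $C^0$ to a continuous metric $g(T)$. Consequently, volumes of small balls centred in a compact $U'\Subset U$ are uniformly comparable to Euclidean ones.
\item If $\abs{\Rm}$ were unbounded on $U'\times[t_0,T)$, I would use a point-picking argument. This produces points $(x_i,t_i)$ with $Q_i=\abs{\Rm}(x_i,t_i)\to\infty$ and curvature at most $4Q_i$ on backward parabolic neighbourhoods of size $\sim A_iQ_i^{-1/2}$, where $A_i\to\infty$ and the neighbourhoods stay inside $U$.
\item Rescale by $Q_i$. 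The rescaled Ricci curvature is at most $KQ_i^{-1}\to0$, and the volume comparability gives non-collapsing. Hamilton compactness then yields a complete Ricci-flat limit with $\abs{\Rm}=1$ at the origin and Euclidean volume growth, with the volume ratio close to Euclidean at all scales.
\item Anderson's gap theorem for Ricci-flat manifolds with almost Euclidean volume ratio then forces the limit to be flat, contradicting $\abs{\Rm}=1$.
\end{itemize}
The delicate point is ensuring that the volume ratios of the limit are uniformly almost Euclidean. I would handle it by choosing $U'$ small enough that $g(T)$ is close to Euclidean in $C^0$ in suitable coordinates, which is possible because of the bi-Lipschitz control.
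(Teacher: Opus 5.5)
Your proposal is correct and follows the route the paper points to through its citations of \cite{BuDM}: part i) comes from a lower bound of the form \eqref{eq.singularstrong} on $(T-t)r_{\Rm}^{-2}(p,t)$ (your weaker bound $\geq 1$ suffices), and part iii) is a localised Sesum theorem, which your argument (point-picking, non-collapsing from the continuous limit metric $g(T)$, Anderson's gap theorem) establishes. One simplification: part ii) does not need the local Sesum theorem, since bounded $\abs{\Ric}$ on $B_{g(t)}(p,r)\times[t,T)$ already bounds $r_{\Ric}(p,s)$ from below for $s$ near $T$ by distance distortion, which uses only the Ricci bound. The hard step is therefore needed only for the inclusion $\Sigma\subseteq\Sigma^{\Ric}$.
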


The first point of this proposition implies that for every singular point $p\in\Sigma$, the Riemann tensor blows up at least at a Type~I rate. More precisely, there is an essential blow-up sequence $(p_i,t_i)$ such that one has
\begin{equation}\label{eq.essentialTypeI}
\lim_{i\to\infty}\, (T-t_i)\abs{\Rm(p_i,t_i)}_{g(t_i)} >0 \qquad\text{and}\qquad \limsup_{i\to\infty}\, \frac{d^2_{g(t_i)}(p_i,p)}{T-t_i} <\infty.
\end{equation}
By the third point, which is a local version of Sesum's result \cite{Ses05}, the Ricci curvature blows up at every singular point, and by the second point this Ricci curvature blow-up must again happen at least at a Type~I rate, meaning in particular that in \eqref{eq.essentialTypeI} one can replace the Riemannian by the Ricci curvature. Together, these results show that every Type~I singular point is also a Ricci Type~I singular point, $\Sigma_I\subseteq \Sigma_I^{\Ric}$. By the main theorem of this article, every such point is also a \emph{scalar curvature Type~I singular point}. Theorem~\ref{mainthm} is in fact stronger, showing that the scalar curvature (and hence also the Ricci and Riemann tensors) blows up at a Type~I rate \emph{at the point $p$} and not just ''in a neighbourhood" of $p$. Moreover, this is true for \emph{every} sequence $t_i \nearrow T$.

In particular, with our new Theorem \ref{mainthm}, we can strengthen \eqref{eq.essentialTypeI} for Type~I points as follows: we can pick $t_i \nearrow T$ \emph{arbitrary} and $p_i \equiv p$ \emph{fixed} to obtain an essential blow-up sequence $(p,t_i)$ with
\begin{equation}\label{eq.newessentialTypeI}
\liminf_{i\to\infty}\, (T-t_i)\abs{\Rm(p,t_i)}_{g(t_i)} >0.
\end{equation}

Finally, we also recall the definition of well behaved (singular) points from \cite{BuDM}. These are points $q$ where we do not only have the obvious bound $\abs{\Ric}(q,t) \leq a_0 r^{-2}_{\Ric}(q,t)$, but also a reverse estimate for times $t$ close to the singular time.

\begin{definition}[Well Behaved Singular Points]\label{defn.wellbehaved}
Let us consider a Ricci flow $(M,g(t))$ defined on $[0,T)$, and let us assume that $\abs{\Ric}$ is not identically $0$. For $\delta \in(0,1)$ and $t_1 \in [0,T)$, we define the set of \emph{$\delta$-well behaved points}
\begin{equation}\label{eq.wellbehaved}
G_\delta = G_{\delta,t_1}:= \big\{ q \in M \mid \delta a_0 r^{-2}_{\Ric}(q,t) < \abs{\Ric(q,t)}_{g(t)}, \text{ for all }t\in[t_1,T)\big\}.
\end{equation}
The set $\Sigma_{\delta} := \Sigma \cap G_\delta$ is the set of points that are both singular and well behaved.
\end{definition}

Under a uniform scalar curvature bound, we then proved a non-existence result for well-behaved singular points in dimensions $n<8$ and showed that in higher dimensions the set $\Sigma_{\delta}$ has codimension at least $8$ in a suitable sense (see Theorems 1.11 and 1.13 in \cite{BuDM}). Implicit in these results was the fact that $\Sigma_{\delta} \subseteq \Sigma_{II}$ for any $\delta>0$, so these results rule out certain Type~II singular points in dimensions $n<8$ and show that they have codimension at least $8$ in higher dimensions in a bounded scalar curvature Ricci flow. The main theorem of this article complements these results, as it rules out Type~I singular points in a bounded scalar curvature Ricci flow, in fact in every dimension $n$. In particular, we obtain the following corollary to Theorem \ref{mainthm}.

\begin{corollary}[Only Badly Behaved Type II Points in Bounded Scalar Curvature Flows]
Let $(M,g(t))_{[0,T)}$, $T<\infty$ be a closed Ricci flow of dimension $n < 8$ with uniformly bounded scalar curvature $\abs{\Sc}\leq R_0$ on $M\times [0,T)$. If $T$ is a singular time, i.e. if \eqref{eq.curvatureblowup} holds, then $\Sigma = \Sigma_{II}$ and for each $p\in \Sigma$ and each sequence $\delta_i \searrow 0$, there are essential blow-up sequences $(p_i,t_i)$ with $p_i \to p$ and $t_i \nearrow T$ such that $p_i \not\in G_{\delta_i,t_i}$, or in other words
\begin{equation}
\lim_{i \to \infty} r_{\Ric}^2(p_i,t_i)\abs{\Ric(p_i,t_i)}_{g(t_i)}=0.
\end{equation}
\end{corollary}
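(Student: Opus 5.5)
The corollary follows by combining Theorem~\ref{mainthm} with the results of \cite{BuDM} recalled above.

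\emph{Step 1: $\Sigma = \Sigma_{II}$.} By the first point of the Proposition, $\Sigma=\Sigma_I\cup\Sigma_{II}$, so it suffices to show $\Sigma_I=\emptyset$. Suppose $p\in\Sigma_I$. Theorem~\ref{mainthm} applies, since a closed flow trivially satisfies \eqref{eq.bddcurv} and \eqref{eq.curvatureblowup} holds by assumption. It gives $c_0>0$ and $T_0<T$ with $\Sc(p,t)\geq c_0/(T-t)$ for $t\in[T_0,T)$. The right-hand side tends to $\infty$ as $t\nearrow T$, which contradicts $\abs{\Sc}\leq R_0$. Hence $\Sigma_I=\emptyset$ and $\Sigma=\Sigma_{II}$. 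This step is dimension-independent.

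\emph{Step 2: no singular point is well behaved.} Here we use the non-existence theorem for well behaved singular points in bounded scalar curvature flows of dimension $n<8$ (Theorem~1.11 in \cite{BuDM}). In the form needed, it says: for every $\delta\in(0,1)$ and every $t_1\in[0,T)$, we have $\Sigma\cap G_{\delta,t_1}=\emptyset$.

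\emph{Step 3: constructing the sequences.} Fix $p\in\Sigma$ and $\delta_i\searrow 0$. Since $p$ is singular, by the Remark following Definition~\ref{def.Riemannscale} the Riemann curvature is unbounded on every neighbourhood of $p$ as $t\nearrow T$. By the third point of the Proposition, $p\in\Sigma^{\Ric}$ as well.

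We would use a diagonal argument. Fix $i$ and a small radius $\rho_i\searrow0$ and a time $s_i\nearrow T$. Suppose, for contradiction, that every point $q\in B_{g(0)}(p,\rho_i)$ and every $t\geq s_i$ satisfies $\abs{\Ric(q,t)}>\delta_i a_0 r_{\Ric}^{-2}(q,t)$. Then the whole neighbourhood lies in $G_{\delta_i,s_i}$, so in particular $p\in\Sigma_{\delta_i}$, contradicting Step~2. So there exist $(p_i,t_i)$ with $d_{g(0)}(p_i,p)<\rho_i$ and $t_i\geq s_i$ such that
\begin{equation*}
\abs{\Ric(p_i,t_i)}\leq\delta_i a_0 r_{\Ric}^{-2}(p_i,t_i).
\end{equation*}
In particular $p_i\notin G_{\delta_i,t_i}$, and $r_{\Ric}^2\abs{\Ric}(p_i,t_i)\leq\delta_i a_0\to0$, which is the displayed limit. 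By construction $p_i\to p$ and $t_i\nearrow T$.

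The main obstacle is ensuring these are \emph{essential blow-up} sequences, i.e.\ that $r_{\Ric}^{-2}(p_i,t_i)\to\infty$, rather than points that are merely quiet. To handle this I would choose $t_i$ at times where $r_{\Ric}^{-2}(p,t_i)$ is huge, which is possible since $p\in\Sigma^{\Ric}$. I would then pick $p_i$ within a parabolic neighbourhood of $(p,t_i)$ whose size is comparable to $r_{\Ric}(p,t_i)$. On such a neighbourhood the Ricci scale is comparable to that at $(p,t_i)$: it is $1$-Lipschitz in the appropriate parabolic sense. This gives both the blow-up of $r_{\Ric}^{-2}(p_i,t_i)$ and the distance condition in \eqref{eq.essentialTypeI}.

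Making this compatible with the contradiction argument of Step~2 requires the non-existence theorem in a localized form: no singular point has a whole parabolic neighbourhood at scale $r_{\Ric}$ consisting of $\delta$-well behaved points. I expect this localized form is what the proof of Theorem~1.11 in \cite{BuDM} actually shows, via a blow-up to a nontrivial Ricci-flat limit that contradicts the codimension-$8$ regularity theory in low dimensions.
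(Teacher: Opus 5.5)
Your Step~1 is correct, and it is exactly how the paper gets $\Sigma=\Sigma_{II}$. The paper then reads the second half directly off the non-existence of well-behaved singular points in \cite{BuDM}; Theorem~\ref{mainthm} contributes nothing further to that part. The gap is in your Step~3. The ball-contradiction argument only produces badly behaved points $(p_i,t_i)$, and you know nothing about their curvature scale. Your proposed repair rests on a ``localized'' non-existence theorem (no singular point has a whole parabolic neighbourhood of $\delta$-well behaved points), which you only expect \cite{BuDM} to contain. As written, the essential blow-up property is therefore not proved.

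That localization is not needed. Take $p_i\equiv p$. Since $p\in\Sigma$ and $\Sigma\cap G_{\delta_i,s}=\emptyset$ for every $s$, for any $s_i\nearrow T$ there is $t_i\geq s_i$ with $r_{\Ric}^2\abs{\Ric}(p,t_i)\leq \delta_i a_0$. Moreover, a singular point has no ``quiet'' times:
\begin{itemize}
\item By \eqref{eq.singularstrong}, $(T-t)r_{\Rm}^{-2}(p,t)>1$ for \emph{all} $t$ close to $T$.
\item Likewise $r_{\Ric}(p,t)\leq\sqrt{T-t}$ for every $t$. Otherwise $\abs{\Ric}<a_0/(T-t)$ on $B_{g(t)}(p,\sqrt{T-t})\times[t,T)$, and distance distortion then bounds $r_{\Ric}(p,\cdot)$ from below up to $T$. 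This contradicts $p\in\Sigma=\Sigma^{\Ric}$.
\end{itemize}
Hence $r_{\Ric}^{-2}(p,t_i)\geq (T-t_i)^{-1}\to\infty$ along your bad times, and the distance condition in \eqref{eq.essentialTypeI} holds trivially. This settles the claim in the scale sense you adopt.

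If ``essential'' is instead read pointwise, as $(T-t_i)\abs{\Rm(p_i,t_i)}\geq c>0$ in \eqref{eq.essentialTypeI}, then neither your argument nor this one yields it from $\Sigma\cap G_{\delta,t_1}=\emptyset$ alone. In that case you must cite the result of \cite{BuDM} in the form stated there, rather than try to reconstruct it.
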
 

\subsection{Ancient Ricci Flows}\label{subsec.ancient}

In the second part of the article, we study ancient Ricci flows, i.e. Ricci flows $(M,g(t))_{t\in I}$ defined for $t\in I=(-\infty,0]$, still assuming the bounded curvature on compact time intervals assumption \eqref{eq.bddcurv}. Also in this case, Hamilton \cite{Ham95} introduced two \emph{global} notions of flows, called Type~I and Type~II ancient solutions. Here, we want to adapt our \emph{local} perspective to distinguish points in ancient Ricci flows in terms of their curvature behaviour as $t \to -\infty$. 

Defining the Riemann scale as in Definition \ref{def.Riemannscale}, we introduce the following notion of ancient Type~I and Type~II points, similar to Definition \ref{def.singpoints}.

\begin{definition}[Ancient Type~I and Type~II Points]\label{def.ancientpoints}
Let $(M,g(t))_{t\in (-\infty,0]}$ be a complete Ricci flow satisfying \eqref{eq.bddcurv}.
\begin{enumerate}[i)]
\item A point $p \in M$ is an \emph{ancient Type~I point} if there exist constants $0<c_I \leq C_I<\infty$ such that
\begin{equation}\label{eq.ancienttypeI}
c_I \leq \limsup_{t \to -\infty} |t| r_{\Rm}^{-2}(p,t) \leq C_I.
\end{equation}
The set of all such points is denoted by $A_I$.
\item We say that a point $p$ is an \emph{ancient Type~II point} if 
\begin{equation}\label{eq.ancienttypeII}
\limsup_{t\to -\infty}\, |t| r_{\Rm}^{-2}(p,t)=\infty
\end{equation}
and write $A_{II}$ for the set of all such points.
\end{enumerate}
As before, using the Ricci scale rather than the Riemann scale in \eqref{eq.ancienttypeI} and \eqref{eq.ancienttypeII}, we also get notions of \emph{ancient Ricci Type~I points} $A_I^{\Ric}$, and \emph{ancient Ricci Type~II points} $A_{II}^{\Ric}$, respectively.
\end{definition}

As a first result, we show that in a non-flat ancient Ricci flow, every point $p\in M$ falls in one of the above two categories, meaning that the curvature tensor cannot go to zero too fast as $t\to-\infty$. In fact, we have a lower bound for $r_{\Rm}^{-2}(p,t)$ similar to \eqref{eq.singularstrong}.

\begin{proposition}[Every point in a non-flat ancient Ricci flow is of Type~I or Type~II]\label{prop.ancient}
Suppose $(M,g(t))_{t\in (-\infty,0]}$ is a complete non-flat Ricci flow satisfying \eqref{eq.bddcurv}. Then for every point $p\in M$ we have 
\begin{equation}\label{eq.strongancient}
\liminf_{t \to -\infty} |t| r_{\Rm}^{-2}(p,t)>1. 
\end{equation}
In particular, we conclude that $M= A_I \cup A_{II}$. Similarly, if we assume additionally that the flow is not Ricci-flat (i.e. not static), then we get the same lower bound for the Ricci scale and hence $M= A^{\Ric}_I \cup A^{\Ric}_{II}$.
\end{proposition}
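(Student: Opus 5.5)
We argue by contradiction, in analogy with the proof of Corollary 2.4 in \cite{BuDM}. Suppose \eqref{eq.strongancient} fails at some $p\in M$. Then there are times $t_i\to-\infty$ with $|t_i|\, r_{\Rm}^{-2}(p,t_i)\le 1$, that is, $r_i:=r_{\Rm}(p,t_i)\ge \sqrt{|t_i|}$. Since the flow is only defined on $(-\infty,0]$, the cylinder $\mathcal{P}(p,t_i,r)$ is $B_{g(t_i)}(p,r)\times\big((t_i-r^2,t_i+r^2)\cap(-\infty,0]\big)$. For any $r<r_i$ we have $\abs{\Rm}<r^{-2}$ on this cylinder. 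Choosing $r$ with $r^2>|t_i|$ (possible whenever $r_i>\sqrt{|t_i|}$, and by a limiting argument in the equality case) makes the time interval contain $(t_i-r^2,0]$. So on $B_{g(t_i)}(p,r)\times(t_i-r^2,0]$ we get $\abs{\Rm}\le r^{-2}\le |t_i|^{-1}$. In particular, the curvature at the fixed point $p$ at any fixed time $s\in(-\infty,0]$ satisfies $\abs{\Rm}(p,s)\le|t_i|^{-1}$ once $t_i<s$ and $i$ is large.

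The next step is to promote this to flatness of the whole space-time. Fix a time $s$ and a point $q\in M$. We must check that $q\in B_{g(t_i)}(p,r)$ for large $i$, where $r\ge\sqrt{|t_i|}$. The key estimate is a distance comparison. On the cylinder the curvature is bounded by $|t_i|^{-1}$, and the elapsed time from $t_i$ to $s$ is at most $|t_i|$. Hence $\int\abs{\Ric}\,dt\le C(n)$ along the time interval, and the metrics $g(t)$ for $t\in[t_i,s]$ are uniformly bi-Lipschitz equivalent to $g(s)$ with constant $e^{C(n)}$. The distortion argument is done inside the ball $B_{g(t_i)}(p,r)$, where the bound holds: a $g(s)$-minimising curve from $p$ to $q$ that stays inside the region has $g(t_i)$-length at most $e^{C}d_{g(s)}(p,q)$. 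Since $r\ge\sqrt{|t_i|}\to\infty$, a standard continuity argument shows that this curve cannot exit $B_{g(t_i)}(p,r)$ for large $i$. Therefore $q$ lies in the cylinder, and $\abs{\Rm}(q,s)\le|t_i|^{-1}\to0$. Thus $\Rm\equiv0$ at all points and times, contradicting non-flatness. This proves \eqref{eq.strongancient}.

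It then follows immediately that $\limsup_{t\to-\infty}|t|r_{\Rm}^{-2}(p,t)\ge 1>0$. Either this limsup is finite, and $p\in A_I$ with $c_I=1$, or it is infinite, and $p\in A_{II}$. Hence $M=A_I\cup A_{II}$.

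The Ricci version runs identically with $r_{\Ric}$ and the bound $\abs{\Ric}<a_0r^{-2}$ on the cylinder. Here the bi-Lipschitz control needs only the Ricci bound, so the argument is even more direct. The contradiction is now that $\Ric\equiv0$, which is excluded by the non-static assumption.

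The main obstacle is the equality/limit case $r_i=\sqrt{|t_i|}$ together with the exact constant $1$ (strict inequality $>1$). To reach the strict inequality we assume $\liminf\le1$ and pick $t_i$ with $|t_i|r_i^{-2}\le 1+\varepsilon_i$. The argument above then only gives the curvature bound on $(t_i-r^2,t_i+r^2)$ with $t_i+r^2$ slightly below $0$, by an amount $\lesssim\varepsilon_i|t_i|$. This is still enough: for any fixed $s$, we have $s<t_i+r^2$ once $\varepsilon_i$ is small relative to $|s|/|t_i|$. If that relative smallness fails, we instead extract the bound at times $s\le t_i+r^2\to$ a value tending to $-\infty$ only at rate $\varepsilon_i|t_i|$. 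We then argue on fixed compact time intervals that are eventually covered, passing to a subsequence so that $t_i+r_i^2\to 0$ relative to any fixed $s$.
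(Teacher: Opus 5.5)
\emph{Verdict: the core of your argument is correct and follows the paper's, but your attempt at the strict inequality has a genuine gap. In fact the strict inequality is false, so only $\liminf\ge1$ can be proved.}

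Your core argument is the same as the paper's. From times $t_i\to-\infty$ with $|t_i|\,r_{\Rm}^{-2}(p,t_i)\le 1$ you obtain $\abs{\Rm}\le |t_i|^{-1}$ on $B_{g(t_i)}(p,\sqrt{|t_i|})\times(2t_i,0)$, and you use distance distortion to swallow fixed regions. The paper swallows the $g(-1)$-balls of radius $e^{-(n-1)}\sqrt{|t_i|}$, gets flatness of $g(-1)$, and then invokes uniqueness. You instead swallow every fixed $(q,s)$ directly, which is a harmless variation. This part is correct. It shows that $|t|\,r_{\Rm}^{-2}(p,t)>1$ for all sufficiently negative $t$, hence $\liminf\ge 1$. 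That is all that is needed for $M=A_I\cup A_{II}$, and the same holds for the Ricci scale.

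The gap is in your last paragraph, where you try to reach the strict inequality. Suppose $|t_i|\,r_i^{-2}=1+\varepsilon_i$ with $\varepsilon_i>0$. Then the cylinder only reaches up to time $s_i:=t_i+r_i^2=-\frac{\varepsilon_i}{1+\varepsilon_i}|t_i|$. The size of $\varepsilon_i|t_i|$ is dictated by the flow, not by a choice of subsequence. If $\varepsilon_i|t_i|\to\infty$, no fixed time slice is ever covered. What remains is not small and gives no contradiction: after rescaling by $|t_i|$, you only know the curvature is at most $1+\varepsilon_i$ on a unit cylinder ending at time $-\varepsilon_i$.

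This cannot be repaired, because the strict inequality is false. Take Hamilton's cigar $g(t)=\frac{dx^2+dy^2}{e^{4t}+x^2+y^2}$ on $\mathbb{R}^2$. Here $\abs{\Rm}$ is a fixed multiple of $\Sc=\frac{4}{1+|x|^2e^{-4t}}$, which is increasing in $t$. For $p=x_0\neq 0$ and $t\to-\infty$, the ball $B_{g(t)}(p,r)$ reaches in to $|x|\approx |x_0|e^{-r}$.
\begin{itemize}
\item If $r^2\ge |t|$, the cylinder contains such points at time $0$, where $\Sc\approx 4$, so the scale condition fails.
\item If $r^2=|t|-\delta$, the supremum of $\Sc$ over the cylinder is about $4(1+|x_0|^2e^{4\delta-2r})^{-1}$. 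So the scale condition holds exactly when $\delta\ge \frac r2+O(\log r)$.
\end{itemize}
Hence $r_{\Rm}^2(p,t)=|t|-(\frac12+o(1))\sqrt{|t|}$. This gives $|t|\,r_{\Rm}^{-2}(p,t)\to 1$ while $t+r_{\Rm}^2(p,t)\to-\infty$, which is precisely the scenario your last paragraph cannot handle.

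The paper's proof also starts from a sequence with $|t_j|\,r_{\Rm}^{-2}(p,t_j)\le 1$, so it likewise only establishes the non-strict bound. That bound, not $\liminf>1$, is what should be claimed.
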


Note that for a non-flat gradient shrinking Ricci soliton with bounded curvature, every point is Type~I, while for a non-flat gradient steady Ricci soliton every point is Type~II. In ancient flows such as the King-Rosenau solution or Perelman's ovals, both ancient Type~I and ancient Type~II points exist.

Our second main result of this article, analogous to Theorem \ref{mainthm}, is a scalar curvature bound for ancient Type~I points.

\begin{theorem}[Scalar Curvature Behaviour at Ancient Type~I Points]\label{ancientthm}
Let $(M,g(t))_{(-\infty,0)}$ be a non-flat, complete, ancient Ricci flow satisfying \eqref{eq.bddcurv}. Let $p \in A_I$ be an ancient Type~I point in the sense of Definition \ref{def.ancientpoints} above. Then there exist a constant $c_0>0$ and a time $T_0 \in (-\infty,0]$ such that $\Sc(p,t)\geq \tfrac{c_0}{\abs{t}}$ for all $t \in (-\infty,T_0]$.
\end{theorem}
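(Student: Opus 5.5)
Suppose Theorem~\ref{ancientthm} fails. Then there is a sequence $t_i\to-\infty$ with $\abs{t_i}\Sc(p,t_i)\to 0$; we may also have $\Sc(p,t_i)\le 0$. We will blow down along this sequence and reach a contradiction with a limiting ancient Type~I model.

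\textbf{Step 1: rescaling.} Set $g_i(s):=\abs{t_i}^{-1}g(t_i+\abs{t_i}s)$ for $s\in(-\infty,0)$. Then $g_i(0)$ corresponds to time $t_i$. The upper bound in \eqref{eq.ancienttypeI} holds only as a $\limsup$, so we first have to upgrade it. Using the argument behind Proposition~\ref{prop.ancient}, we aim to show that $\abs{t}r_{\Rm}^{-2}(p,t)\le C$ for all $t\le T_1$. This gives $\abs{\Rm_{g_i}}\le C'/\abs{s}$ on parabolic regions $B_{g_i(s)}(p,c\sqrt{\abs s})\times[-A,-a]$, uniformly in $i$, for every $0<a<A$. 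The same holds at times slightly after $0$ in the rescaled flow, by the definition of $r_{\Rm}$ with the two-sided cylinder.

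\textbf{Step 2: lower scalar bound.} Ancient flows with \eqref{eq.bddcurv} and complete metrics satisfy $\Sc\ge 0$ (Chen's result). Moreover, $\Sc_{g_i}(p,0)\to 0$. By the strong maximum principle for $\partial_s \Sc=\Delta \Sc+2\abs{\Ric}^2$, applied locally, together with the local curvature bounds of Step~1 and Shi-type derivative estimates, this smallness should propagate. Concretely, a Harnack or mean-value argument for the nonnegative supersolution $\Sc$ on the parabolic cylinder around $(p,0)$ backward in time shows that $\Sc_{g_i}\to 0$ on $B(p,r)\times[-\tau,0]$ in a suitable averaged sense. Integrating the evolution of $\Sc$ then gives $\int\!\!\int \abs{\Ric_{g_i}}^2\to 0$ there.

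\textbf{Step 3: contradiction.} Because of possible collapse, we do not take a Cheeger--Gromov limit. Instead, we pass to local universal covers (pull back to tangent balls via the exponential map, as done in \cite{BuDM}) on which injectivity radius bounds hold, and extract a smooth local limit flow. By Step~2, this limit is Ricci-flat on a cylinder, hence static, and so $\Rm$ is time-independent there. However, the lower bound in \eqref{eq.strongancient} and the $\limsup$ lower bound $c_I$ should provide times $s$ in a compact range where $\abs{s}r_{\Rm}^{-2}$ is bounded below. We would combine this with the local Ricci version in Proposition~\ref{prop.ancient}, namely $\liminf \abs{t}r_{\Ric}^{-2}>1$ for non-static flows. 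This gives a point near $p$, at a rescaled time in $[-A,0]$, with $\abs{\Ric}\gtrsim 1$ within a controlled distance, which contradicts Ricci-flatness of the limit.

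\textbf{Main obstacle.} The hardest step is transferring vanishing of $\Sc$ at the single point $(p,0)$ to vanishing of $\Ric$ on a whole region, in the Ricci-scale sense, which is needed to match Proposition~\ref{prop.ancient}. I would first try a local Harnack inequality for $\Sc$. Failing that, I would use the fact that the local limit has $\Sc\ge0$ and $\Sc(p,0)=0$, so the strong maximum principle forces $\Sc\equiv0$ backward in time, hence $\Ric\equiv0$. Finally, I would check carefully that the Ricci-scale lower bound is realised inside the region where the limit is controlled.
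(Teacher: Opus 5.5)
The genuine gap is in Step~3, and it is not a detail to be checked at the end: the contradiction you want is not available from local information at all.

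Steps~1--2 are essentially sound. The ``upgrade'' in Step~1 is automatic: $\limsup_{t\to-\infty}\abs{t}r_{\Rm}^{-2}(p,t)\le C_I$ already gives $\abs{t}r_{\Rm}^{-2}(p,t)\le 2C_I$ for all sufficiently negative $t$. Once you have a smooth local limit on local covers around $(p,t_i)$, $\Sc\ge 0$ together with $\Sc=0$ at the centre gives $\Ric\equiv 0$ on the backward half of the cylinder by the strong maximum principle. So what you call the main obstacle is actually the easy part.

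The Type~I hypothesis controls curvature only on $\mathcal{P}(p,t,r_{\Rm}(p,t))$, and all you know is $r_{\Rm}^2(p,t)\ge \abs{t}/(2C_I)$. Your limit, and hence the Ricci-flat region, is therefore confined to a tube of radius about $\sqrt{\abs{t}/(2C_I)}$ around the worldline of $p$, and only to times before $t_i$. The witness for $\abs{t}r_{\Ric}^{-2}(p,t)>1$ is just some point of $\overline{\mathcal{P}}(p,t,r_{\Ric}(p,t))$ with $r_{\Ric}(p,t)<\sqrt{\abs{t}}$. Since $r_{\Rm}\le r_{\Ric}$, that point may lie:
\begin{itemize}
\item up to a factor $\sqrt{2C_I}$ outside the controlled region;
\item at a later time, where the maximum principle says nothing;
\item on the edge of the controlled region, where there is no smooth convergence.
\end{itemize}
Even if you extend your local limit along the whole worldline of $p$, you get a local flow that is static and, by the Type~I upper bound, flat on the whole tube. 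Nothing in the local picture rules out curvature of size $\sim 1/\abs{t}$ sitting at distance $\sim\sqrt{\abs{t}}$ just outside that tube. You have no mechanism to propagate flatness beyond the region where curvature is controlled.

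The missing idea is a \emph{global} limit in which vanishing at one point spreads everywhere. The paper's argument runs as follows.
\begin{itemize}
\item It blows down via Bamler's $\mathbb{F}$-convergence, with the conjugate heat kernel based at $(p,0)$. The limit is a singular shrinking soliton with $\Sc\ge0$ on its regular part.
\item To use $p$ as a basepoint at \emph{every} time $t_0\le -1$, it shows that $(p,s)$ is an $H$-center of $(p,0)$. The reduced length along the constant path is bounded by the Type~I bound on $\Sc(p,\cdot)$, which gives an on-diagonal heat kernel lower bound.
\item It also shows the limit is continuous at all times, so the convergence is time-wise. The Type~I bound on $r_{\Rm}(p,\cdot)$ then places the limit of $(p,t_0)$ in the regular part, with smooth convergence nearby.
\item $\Sc^\infty=0$ at one regular point forces $\Ric^\infty\equiv0$ on all of $\mathcal{R}$. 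This uses the strong maximum principle for the soliton identity on a small ball, then real analyticity and connectedness of $\mathcal{R}_X$, then self-similarity in time.
\item Hence the limit is static and $r_{\Rm^\infty}(z_0,t_0)$ is independent of $t_0$. This contradicts $1/\abs{t_0}\le r^{-2}_{\Rm^\infty}(z_0,t_0)\le 2C_I/\abs{t_0}$ for all $t_0\le-1$.
\end{itemize}
This contradiction compares the curvature scale at a single limit point across infinitely many scales. A local limit of bounded parabolic size around $(p,t_i)$ cannot see that.
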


The main difficulties in proving Theorems~\ref{mainthm} and \ref{ancientthm} come from the facts that, in general, one cannot take a smooth global blow-up (or blow-down) limit without a global curvature assumption, and working only with a local limit (which one can always obtain at a Type~I point) does not seem to give sufficient information to conclude positivity of the scalar curvature of the limit. We therefore need to work with a weak global limit, or more precisely with the notion of $\mathbb{F}$-convergence to a singular Ricci shrinker in the sense of Bamler \cite{Bamler2, Bamler3}. In Section~\ref{sec.Bamler}, we give a short overview of this theory and of the main blow-up and blow-down results. We then prove Theorem~\ref{mainthm} in Section~\ref{sec.blowup}. Finally, we prove Proposition~\ref{prop.ancient} and Theorem~\ref{ancientthm} in Section~\ref{sec.ancient}.\\

\textbf{Acknowledgements.} The authors thank Davide Dameno for interesting discussions. RB has been partially supported by the Italian PRIN project ``Differential-geometric aspects of manifolds via global analysis'' (No.~20225J97H5).

\section{Bamler's Theory in a Nutshell}\label{sec.Bamler}

In the groundbreaking trilogy \cite{Bamler1, Bamler2, Bamler3}, Bamler developed a weak compactness theory for Ricci flows in arbitrary dimensions, as well as a precise structure theory for non-collapsed weak limits obtained this way. While originally phrased for closed manifolds, in the appendix of \cite{Bamler4} (see also \cite{CMZ21, MZ21}), he justified why his results also work for complete non-compact Ricci flows with bounded curvature on compact time intervals, which is the setting we are interested in here. Let us remark that in the special case of Ricci flows given by gradient shrinking Ricci solitons, Li-Wang \cite{LW1} and Bertellotti-Buzano \cite{BB25} have further extended some of Bamler's theory to the case without bounded curvature.

In this article, we use Bamler's theory as a black box. For this reason, we give a brief summary of the fundamental definitions and results concerning weak blow-ups at the first singular time and blow-downs at negative infinity, as well as the regular-singular decomposition of the limits obtained this way. These results are needed to prove Theorems~\ref{mainthm} and ~\ref{ancientthm} in the next sections. For more detailed summaries of these parts of Bamler's theory, we refer the reader for example to the appendix of Li-Wang \cite{LW2}, and Chapter 1 of Bertellotti's PhD thesis \cite{Ber-thesis} -- but for many subtle, technical details, it is probably best to go back to Bamler's original work.

A fundamental ingredient in Bamler's theory is his weak compactness theory for so-called metric flow pairs. To explain this concept, let us start with a \emph{rough} version of the definition of metric flows.
\begin{definition}
A \emph{metric flow} over a subset $I \subseteq \mathbb{R}$ is a tuple
\begin{equation}
\big(\mathcal{X}, \mathfrak{t}, (d_t)_{t\in I}, (\nu_{\mathbf{x};s})_{\mathbf{x}\in \mathcal{X},s\in I, s\leq \mathfrak{t}(\mathbf{x})}\big)
\end{equation}
where $\mathcal{X}$ is a set of points, $\mathfrak{t}:\mathcal{X}\to I$ is a map called time function, $d_t$ are complete and separable metrics on the time slices $\mathcal{X}_t := \mathfrak{t}^{-1}(t)$, and $\nu_{\mathbf{x};s}$ are probability measures satisfying the reproduction formula
\begin{equation}
\nu_{\mathbf{x};t_1}(A)=\int_{\mathcal{X}_{t_2} } \nu_{\mathbf{y};{t_1}}(A) d\nu_{\mathbf{x};t_2}(\mathbf{y}), \quad \forall t_1,t_2,t_3 \in I, \ t_1 \leq t_2 \leq t_3, \ \forall \mathbf{x} \in \mathcal{X}_{t_3},
\end{equation}
for every Borel set $A \subseteq \mathcal{X}_{t_1}$, as well as $\nu_{\mathbf{x};\mathfrak{t}(\mathbf{x})}=\delta_\mathbf{x}$ for all $\mathbf{x}\in\mathcal{X}$. The measures $(\nu_{\mathbf{x};s})_{s\in I, s\leq \mathfrak{t}(\mathbf{x})}$ are called \emph{conjugate heat kernels} at $\mathbf{x}$. We abbreviate a metric flow simply by $\mathcal{X}$.
\end{definition}

For the \emph{precise} definition (including in particular also a technical regularity requirement), we refer the reader to Definition 3.1 in \cite{Bamler2}. We remark that all the results of Bamler stated below work with this precise definition.

\begin{definition} 
A metric flow is \emph{$H$-concentrated} if for any $s\leq t$ in $I$ and $\mathbf{x},\mathbf{y}\in \mathcal{X}_t$, we have
\begin{equation}
\Var(\nu_{\mathbf{x};s},\nu_{\mathbf{y};s}) := \int_{\mathcal{X}_s}\int_{\mathcal{X}_s} d_s^2(\mathbf{z}_1,\mathbf{z}_2) d\nu_{\mathbf{x};s}(\mathbf{z}_1) d\nu_{\mathbf{y};s}(\mathbf{z}_2) \leq d^2_t(\mathbf{x},\mathbf{y})+ H(t-s).
\end{equation}
A point $\mathbf{z} \in \mathcal{X}_s$ is called an \emph{$H$-center} of $\mathbf{x}\in\mathcal{X}_t$ if $s\leq t$ and
\begin{equation}
\Var(\nu_{\mathbf{x};s},\delta_\mathbf{z})  \leq H(t-s).
\end{equation}
\end{definition}

By Proposition 3.25 of \cite{Bamler2}, if $\mathcal{X}$ is $H$-concentrated, then for every $\mathbf{x}\in \mathcal{X}_t$ and every $s\in I$ with $s\leq t$, there exist $H$-centers $\mathbf{z} \in \mathcal{X}_s$ of $\mathbf{x}$. These will play a crucial role below.

\begin{definition} 
A family of probability measures $(\mu_t)_{t\in I'}$, where $I'\subseteq I$, is called a \emph{conjugate heat flow} if for any $s,t\in I'$ with $s\leq t$ we have
\begin{equation}
\mu_s(A)=\int_{\mathcal{X}_t} \nu_{\mathbf{x};s}(A) d\mu_t(\mathbf{x}),
\end{equation}
for every Borel set $A \subseteq \mathcal{X}_s$. Finally, for an interval $I \subseteq \mathbb{R}$, a \emph{metric flow pair} is a pair $(\mathcal{X},(\mu_t)_{t\in I'})$ where $I'\subseteq I$ with $|I \setminus I' |=0$, $\mathcal{X}$ is a metric flow over $I'$, and $(\mu_t)_{t\in I'}$ is a conjugate heat flow that has full support for all $t\in I'$.
\end{definition}

Now consider for $I=[0,T)$ a complete $n$-dimensional Ricci flow $(M, g(t))_{t\in I}$ satisfying \eqref{eq.bddcurv} and \eqref{eq.curvatureblowup}. We can associate to this Ricci flow a metric flow over $I$ in a natural way. First, let
\begin{equation*}
\mathcal{X}=M\times I
\end{equation*}
and denote by $\mathfrak{t} : \mathcal{X} \to I$ the projection onto the second factor. Then each time slice $\mathcal{X}_{t}:=\mathfrak{t}^{-1}(t)$ is given by $\mathcal{X}_{t}=M\times\{t\}\simeq M$, and we endow $\mathcal{X}_{t}$ with the Riemannian distance $d_{t}=d_{g(t)}$, turning it into a complete metric space $(\mathcal{X}_{t},d_{t})$. Next, for $(y,s) \in M\times I$ consider the heat kernel $K(\cdot,\cdot ; y,s)$, i.e.~the positive smooth solution of 
\begin{equation*}
\partial_t K(\cdot,t;y,s) = \Delta_{g(t)} K(\cdot,t;y,s)
\end{equation*}
satisfying 
\begin{equation*}
\lim_{t\searrow s}K(\cdot,t;y,s) = \delta_y
\end{equation*} 
in the weak* sense. Fixing instead $(x,t)\in M\times I$, then $K(x,t; \cdot,\cdot)$ satisfies the conjugate heat equation 
\begin{equation*}
\partial_s K(x,t; \cdot,s) = -\Delta_{g(s)} K(x,t; \cdot,s) + \Sc_{g(s)} K(x,t; \cdot,s)
\end{equation*}
as well as
\begin{equation*}
\lim_{s\nearrow t} K(x,t; \cdot,s) = \delta_x
\end{equation*} 
in the weak* sense. For a point $\mathbf{x}=(x,t) \in \mathcal{X}$, we then set
\begin{equation*}
\nu_{\mathbf{x};s} = \begin{cases} 
K(x,t; \cdot,s) dV_{g(s)}, & \text{if } s<t,\\
\delta_\mathbf{x}, & \text{if } s=t.
\end{cases}
\end{equation*}

\begin{proposition}[Theorem 3.36 of \cite{Bamler2}]\label{RFmetricflow}
$\mathcal{X} = (\mathcal{X}, \mathfrak{t}, (d_t)_{t\in I}, (\nu_{\mathbf{x};s})_{\mathbf{x}\in \mathcal{X},s\in I, s\leq \mathfrak{t}(\mathbf{x})})$ obtained as above is an $H_n$-concentrated metric flow, where 
\begin{equation}\label{defHn}
H_n := \frac{n-1}{2}\pi^2 + 4.
\end{equation}
Moreover, conjugate heat flows on this $\mathcal{X}$ are precisely the probability measures $\mu_t = v(\cdot,t) dV_{g(t)}$ where $v$ is a nonnegative solution of the conjugate heat equation.
\end{proposition}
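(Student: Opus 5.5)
The plan is to check the three ingredients of the statement in turn: the metric flow axioms, the $H_n$-concentration, and the description of conjugate heat flows. The concentration bound is the only step that is not formal.

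\emph{Metric flow axioms.} By construction $(\mathcal{X}_t,d_t)$ is a complete separable metric space and $\nu_{\mathbf{x};t}=\delta_{\mathbf{x}}$. Each $\nu_{\mathbf{x};s}$ is a probability measure. This uses $\frac{d}{ds}\int_M K(x,t;\cdot,s)\,dV_{g(s)}=0$, which follows from the conjugate heat equation together with $\partial_s dV_{g(s)}=-\Sc\, dV_{g(s)}$. Integration by parts is justified in the non-compact case by the Gaussian bounds for $K$ under \eqref{eq.bddcurv}. The reproduction formula is the semigroup identity
\begin{equation*}
K(x,t_3;z,t_1)=\int_M K(x,t_3;y,t_2)\,K(y,t_2;z,t_1)\,dV_{g(t_2)}(y),
\end{equation*}
which comes from uniqueness of bounded solutions of the heat equation on flows with bounded curvature. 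The technical regularity requirement of Bamler's precise definition is a continuity statement for $t\mapsto\int u\,d\nu_{\cdot;t}$ along heat flows. It should follow from smoothness of $K$ away from the diagonal and from the Lipschitz property below.

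\emph{$H_n$-concentration.} This is the main obstacle, since no curvature bound independent of the interval may enter. I would first prove the time-monotonicity of $W_1$: if $u$ solves the heat equation with $|\nabla u|\le 1$ at time $s$, then Bochner's formula along Ricci flow gives
\begin{equation*}
\partial_t|\nabla u|^2=\Delta|\nabla u|^2-2|\nabla^2u|^2,
\end{equation*}
with no curvature term, so the maximum principle preserves $|\nabla u|\le1$. By Kantorovich duality this yields $d_{W_1}(\nu_{\mathbf{x};s},\nu_{\mathbf{y};s})\le d_t(\mathbf{x},\mathbf{y})$.

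Next I would prove the sharper gradient estimate of Bamler type. For a solution $u$ of the heat equation with values in $(0,1)$, the function $\Phi^{-1}(u)$ should satisfy $|\nabla\Phi^{-1}(u)|\le (t-s)^{-1/2}$ (up to normalisation), where $\Phi$ is the Gaussian distribution function. As before, the argument is a maximum principle computation in which the Ricci terms cancel under the flow. Applying this estimate to heat flows of indicator-type data gives Gaussian concentration of $\nu_{\mathbf{x};s}$ on sublevel sets of distance functions.

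Integrating this concentration against $d_s^2$ should give a variance bound $\Var(\nu_{\mathbf{x};s})\le H_n(t-s)$. I expect the constant $\tfrac{n-1}{2}\pi^2$ to come from an eigenvalue or diameter-type comparison along geodesics. This is where I expect to have to follow Bamler's argument most closely, and I would first try to reproduce it on the one-dimensional model.

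Finally, I would combine the variance bound with the $W_1$ estimate to get the two-point bound
\begin{equation*}
\Var(\nu_{\mathbf{x};s},\nu_{\mathbf{y};s})\le d_t^2(\mathbf{x},\mathbf{y})+H_n(t-s).
\end{equation*}
The route is to write $\Var(\mu,\nu)=\Var(\mu)+\Var(\nu)+|\text{mean discrepancy}|^2$ in the appropriate coupled form. Since there is no linear structure on $M$, I would instead apply a monotonicity argument to $s\mapsto\Var(\nu_{\mathbf{x};s},\nu_{\mathbf{y};s})+H_n s$, differentiating in $s$ via the conjugate heat equation.

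\emph{Conjugate heat flows.} If $v\ge0$ solves the conjugate heat equation with $\int v\,dV=1$, then the representation $v(\cdot,s)=\int K(x,t;\cdot,s)\,v(x,t)\,dV_{g(t)}(x)$, again obtained by uniqueness, gives the defining identity of a conjugate heat flow. Conversely, suppose $(\mu_t)$ is a conjugate heat flow. For any $s<t$ in $I'$ we have $\mu_s=\int\nu_{\mathbf{x};s}\,d\mu_t(\mathbf{x})$, so $\mu_s$ has a smooth density $v(\cdot,s)=\int K(x,t;\cdot,s)\,d\mu_t(x)$, which solves the conjugate heat equation on $s<t$. Consistency for different choices of $t$ follows from the reproduction formula. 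Letting $t$ range over $I'$, which has full measure in $I$, gives a global nonnegative solution $v$.
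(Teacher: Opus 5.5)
\textbf{There is a genuine gap, in the one step you flag as non-formal.} The paper gives no proof of its own here. It imports the result from Bamler, and the concentration part is Bamler's variance estimate, which the paper quotes as Corollary~3.8 of his first paper. Your formal steps are fine.

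Your proposed route, ``$\Phi^{-1}$ gradient estimate $\Rightarrow$ Gaussian concentration $\Rightarrow$ variance bound'', cannot work. The $\Phi^{-1}$ estimate controls how $\nu_{(x,t);s}(A)$ varies with the base point $x$, measured in $d_t$. Even combined with $W_1$-monotonicity, it says nothing about the $d_s$-spread of a single kernel. For instance, take the Euclidean heat kernels on $\mathbb{R}$ with slice distances $d_t(x,y)=e^{t}\abs{x-y}$, $t\geq 0$. They satisfy both properties, yet the variance of $\nu_{(x,t);s}$ with respect to $d_s$ equals $4e^{2s}(t-s)$.

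The missing ingredient is the pointwise barrier inequality on $M\times M$:
\begin{equation*}
\big(\partial_t-\Delta_x-\Delta_y\big)\,d_t^2(x,y)\;\geq\;-H_n .
\end{equation*}
To prove it:
\begin{enumerate}[i)]
\item Take a minimizing unit-speed $g(t)$-geodesic $\gamma:[0,L]\to M$ from $x$ to $y$ and a parallel orthonormal frame $E_1,\dots,E_{n-1}\perp\dot\gamma$.
\item Use the variation fields $\cos\theta\,E_i$ and $\sin\theta\,E_i$ with $\theta(\sigma)=\frac{\pi}{4}-\frac{\pi\sigma}{2L}$. Their endpoint values $\frac{1}{\sqrt2}(E_i(0),\pm E_i(L))$, together with $\frac{1}{\sqrt2}(\dot\gamma(0),\pm\dot\gamma(L))$ (which contribute nothing), form an orthonormal basis of $T_{(x,y)}(M\times M)$.
\item Since $\cos^2\theta+\sin^2\theta=1$, the second variation formula gives $(\Delta_x+\Delta_y)d_t\le\frac{(n-1)\pi^2}{4L}-\int_\gamma\Ric(\dot\gamma,\dot\gamma)$.
\item On the other hand, $\partial_tL_t(\gamma)=-\int_\gamma\Ric(\dot\gamma,\dot\gamma)$ bounds the backward time derivative of $d_t$ from below by the same integral.
\item The Ricci terms therefore cancel exactly, giving $(\partial_t-\Delta_x-\Delta_y)d_t\geq-\frac{(n-1)\pi^2}{4d_t}$. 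Finally $\abs{\nabla_{M\times M}d_t}^2=2$ produces the extra $4$ in $H_n=\frac{n-1}{2}\pi^2+4$.
\end{enumerate}

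With this inequality, your final monotonicity idea is the right one, and it makes a separate one-point bound unnecessary. For conjugate heat flows, $\frac{d}{dt}\int u\,d\mu_t=\int(\partial_t-\Delta)u\,d\mu_t$. Integrating the inequality against $\mu_{1,t}\otimes\mu_{2,t}$ shows that $t\mapsto\Var(\mu_{1,t},\mu_{2,t})+H_nt$ is non-decreasing. Applying this to $\nu_{\mathbf{x};\cdot}$ and $\nu_{\mathbf{y};\cdot}$ between $s$ and $t$ gives $\Var(\nu_{\mathbf{x};s},\nu_{\mathbf{y};s})\le d_t^2(\mathbf{x},\mathbf{y})+H_n(t-s)$, with $\mathbf{x}=\mathbf{y}$ as a special case. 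Without that pointwise estimate, ``differentiating in $s$ via the conjugate heat equation'' proves nothing.

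\textbf{Secondary issue: the technical axiom is misidentified.} The extra axiom in Bamler's definition of a metric flow is not a continuity statement. It requires the following, in Bamler's normalisation of $\Phi$. For $s<t$, $T\ge 0$, and measurable $u_s:\mathcal{X}_s\to[0,1]$ with $\Phi^{-1}\circ u_s$ being $T^{-1/2}$-Lipschitz (no condition if $T=0$), the function $u_t(\mathbf{x})=\int u_s\,d\nu_{\mathbf{x};s}$ is either constant or has $\Phi^{-1}\circ u_t$ $(T+t-s)^{-1/2}$-Lipschitz. Your $\Phi^{-1}$ maximum-principle estimate is exactly what verifies this axiom, not an ingredient of the variance bound. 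You need it in this propagated form, not only for $T=0$, and smoothness of $K$ does not supply it.
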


In the following, we write $\Var(\mu_t) := \Var(\mu_t, \mu_t)$. In Section 2.6 of \cite{Bamler3}, Bamler considers tangent flows at the first singular time $T$ of a Ricci flow as follows: first, he constructs a final time-slice for the given flow defined by
\begin{equation*}
M_T := \{ (\mu_{t})_{t\in [0,T)} \mid \mu_t \text{ is a conjugate heat flow on $(M,g(t))$ with } \lim_{t \nearrow T} \Var(\mu_t)=0\}.
\end{equation*} 
endowed with the distance function
\begin{equation*}
d_{M_T}\big( (\mu_{1,t})_{[0,T)}),(\mu_{2,t})_{[0,T)} \big):=\lim_{t \nearrow T} d_{W_1}^{g_t}(\mu_{1,t},\mu_{2,t}) \in [0,+\infty].
\end{equation*}
Note that the limit exists by the monotonicity of the $W_1$-Wasserstein distance \cite[Lemma 2.7]{Bamler1}. By Lemma 2.35 of \cite{Bamler3}, $(M_T,d_{M_T})$ is a complete metric space (allowing infinite distances), locally isometric to the (possibly degenerate) metric space induced by $g_T:=\lim_{t \nearrow T} g_t$, on each open set where this limit exists smoothly.

Denoting elements in $M_T$ as points $x=(\mu_{t})_{t\in[0,T)}$, and setting 
\begin{equation*}
d\nu_{(x,T);s} =d \mu_s=K(x,T; \cdot, s) dV_{g(s)} =(4 \pi (T-s))^{-\frac{n}{2}} e^{-f(\cdot,s)} dV_{g(s)}
\end{equation*}
this gives a definition of a conjugate heat kernel $K(x,T; \cdot, \cdot)$ based at $(x,T)$.

\begin{theorem}[Theorem 2.37 of \cite{Bamler3}]\label{thm.Bamlerblowup1}
Take any $\mu_t =\nu_{(x,T);t} \in M_T$ as above and consider the rescaled flows
\begin{equation*}
g^i(t):=\lambda_i g(\lambda_i^{-1} t+T), \qquad\qquad t\in [-\lambda_i T,0),
\end{equation*}
for some choice of rescaling parameters $\lambda_i \nearrow \infty$, as well as the correspondingly rescaled conjugate heat flow measures
\begin{equation*}
{\mu}^i_t:={\nu}_{(x,T);\lambda_i^{-1}t+T}, \qquad\qquad t\in [-\lambda_i T,0).
\end{equation*}
Interpreting $(M,(g^i(t))_{t\in[-\lambda_i T,0)},({\mu}^i_t)_{t\in[-\lambda_i T,0)})$ as metric flow pairs (as in Proposition \ref{RFmetricflow} above), there exists a subsequence for which we have uniform $\mathbb{F}$-convergence on compact time intervals within some correspondence $\mathfrak{C}$ to a limit metric flow pair $(\mathcal{X}^\infty,(\mu^\infty_t)_{t\in (-\infty,0)})$ that is a metric soliton. Moreover, the Nash entropy along the conjugate heat flow $\mu^\infty_t$ is constant.
\end{theorem}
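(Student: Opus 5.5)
This result is cited from Bamler's work (Theorem 2.37 of \cite{Bamler3}) and is used here as a black box. Still, a proof would proceed along the following lines, relying on Bamler's compactness and monotonicity theory.

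First, I would record uniform a priori bounds on the rescaled pairs $(M,(g^i(t)),(\mu^i_t))$. Each is an $H_n$-concentrated metric flow by Proposition~\ref{RFmetricflow}, and $H$-concentration is scale invariant. The conjugate heat flow $\mu^i_t$ has variance tending to $0$ as $t\nearrow 0$, since $\Var(\nu_{(x,T);s})\to0$ as $s\nearrow T$ by the definition of $M_T$. Combined with $H_n$-concentration, this gives $\Var(\mu^i_t)\le H_n\abs{t}$. Moreover, the pointed Nash entropy $\mathcal{N}_{(x,T)}(\tau)$ of the unrescaled flow is nonincreasing in $\tau=T-s$ and bounded below on $[0,T)$ in terms of $T$ and the geometry of the initial slice. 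Hence, after rescaling, the entropies $\mathcal{N}^i(\tau)=\mathcal{N}_{(x,T)}(\lambda_i^{-1}\tau)$ converge for every fixed $\tau>0$ to the finite limit $\mathcal{N}_0:=\lim_{\tau\searrow0}\mathcal{N}_{(x,T)}(\tau)$.

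Second, I would invoke Bamler's compactness theorem for $H$-concentrated metric flow pairs with variance bounds. Uniform control of $\Var(\mu^i_t)$ on compact time intervals gives precompactness in the $\mathbb{F}$-topology. A diagonal argument over exhausting intervals $[-k,-k^{-1}]$ then yields a subsequence converging uniformly on compact time intervals, within a correspondence $\mathfrak{C}$, to a limit pair $(\mathcal{X}^\infty,(\mu^\infty_t)_{t<0})$.

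Third, I would show that the limit is a metric soliton with constant Nash entropy. Entropy passes to $\mathbb{F}$-limits under the non-collapsing given by the lower entropy bound. Since $\mathcal{N}^i(\tau)\to\mathcal{N}_0$ for every $\tau$, the limit entropy $\mathcal{N}^\infty(\tau)$ is constant in $\tau$. Bamler's entropy rigidity (the equality case of the monotonicity, via his $\mathcal{W}$-type integral identity controlling $\int\tau\abs{\Ric+\nabla^2f-\frac{1}{2\tau}g}^2d\mu$) then forces the regular part to satisfy the shrinker equation. Using the structure theory of non-collapsed limits, this upgrades to $\mathcal{X}^\infty$ being a metric soliton.

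I expect the main obstacle to be this third step: making the rigidity argument work on a singular limit, where one needs the regular-singular decomposition, the codimension-4 singular set, and passing the integrated soliton identity through $\mathbb{F}$-convergence.
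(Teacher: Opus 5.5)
The paper gives no proof of this statement. It imports it from \cite{Bamler3} as a black box, with the passage from closed manifolds to complete flows satisfying \eqref{eq.bddcurv} delegated to the appendix of \cite{Bamler4}, just as you treat it. Your outline follows the architecture of Bamler's argument: scale-invariant $H_n$-concentration with $\Var(\mu^i_t)\le H_n\abs{t}$, then $\mathbb{F}$-compactness, then entropy pinching forcing the shrinker equation on $\mathcal{R}$. One claim, however, is made at a stage where the tools do not yet give it.

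In your second step you assert \emph{uniform} convergence on compact time intervals as an output of compactness plus a diagonal argument. Bamler's compactness theorem gives less than that:
$\mathbb{F}$-convergence on compact time intervals (a notion that tolerates exceptional time sets of small measure), time-wise convergence at continuity times of the limit, and uniformity only on compact intervals consisting of continuity times (Theorem~7.6 of \cite{Bamler2}). So uniformity can only be obtained after your third step. Once $\mathcal{X}^\infty$ is a metric soliton, $t\mapsto\int\int d_t\,d\mu^\infty_t\,d\mu^\infty_t$ is a constant multiple of $\sqrt{\abs{t}}$. Hence $\mathcal{X}^\infty$ is continuous at every $t<0$, and Theorem~7.6 upgrades the convergence. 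This is exactly what the paper has to supply separately in Lemma~\ref{lemma.continuity}, so your steps should be reordered here.

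Two smaller points.
\begin{enumerate}[i)]
\item Finiteness of $\mathcal{N}_0$ needs an upper bound such as $\mathcal{N}_{(x,T)}\le 0$. Since $\mathcal{N}_{(x,T)}(\tau)$ is nonincreasing in $\tau$, a lower bound alone does not prevent $\mathcal{N}_{(x,T)}(\tau)\to+\infty$ as $\tau\searrow 0$.
\item In the third step, do not run the equality case of the monotonicity on the singular limit. There the identity $\tfrac{d}{d\tau}\mathcal{W}=-2\tau\int\abs{\Ric+\nabla^2 f-\tfrac{1}{2\tau}g}^2\,d\nu$ would require integrating by parts across $\mathcal{S}$. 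Run it on the smooth rescaled flows instead: the entropy drop $\mathcal{N}_{(x,T)}(\lambda_i^{-1}\tau_1)-\mathcal{N}_{(x,T)}(\lambda_i^{-1}\tau_2)\to 0$ controls a weighted space-time integral of this defect against $\mu^i$. Smooth convergence on $\mathcal{R}$ together with Fatou's lemma then yields the shrinker equation there.
\end{enumerate}
The genuinely hard part, which you leave to ``structure theory'', is then showing that this pointwise equation on $\mathcal{R}$ integrates to self-similarity of the whole metric flow.
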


For the definitions of metric solitons and of $\mathbb{F}$-convergence within a correspondence $\mathfrak{C}$, see Sections 3 and 6 of \cite{Bamler2}. Luckily, for most purposes, we will not need these concepts at all, due to the following further structural results.

\begin{theorem}[Bamler \cite{Bamler3}, in particular Theorems 2.4 and 2.18]\label{thm.Bamlerblowup2}
We can identify $\mathcal{X}^\infty$ with a product metric flow $X \times (-\infty,0)$, where $(X,d)$ is a singular space for some metric $d$ and $(\mathcal{X}_t^\infty,d_t)=(X \times \set{t},\sqrt{|t|}d)$.
Moreover, $\mathcal{X}^\infty$ can be decomposed into an open and dense regular part $\mathcal{R}$ and a singular part $\mathcal{S}$ of measure zero. On the entire regular part $\mathcal{R}$ the $\mathbb{F}$-convergence is actually in the smooth sense and we can define a limit metric $g^{\infty}$ on $\mathcal{R}$ (see the remark below). We then have the identifications
\begin{equation}
\mathcal{R}=\mathcal{R}_X \times (-\infty,0) \qquad\text{and}\qquad (\mathcal{R}_t,g_t^\infty)=(\mathcal{R}_X \times \set{t},|t| g_X).
\end{equation}
Furthermore, writing 
\begin{equation*}
\mu^\infty_t = \nu_{(x^\infty,0);t }= K(x^\infty,0;\cdot,t) dV_{g_t^\infty} =(-4 \pi t)^{-\frac{n}{2}} e^{-f(\cdot,t)} dV_{g_t^\infty}
\end{equation*}
on $\mathcal{R}$, then the gradient Ricci soliton equation
\begin{equation}
\Ric_{g_t^\infty} + \nabla^2 f = \frac{1}{2\abs{t}}g_t^\infty
\end{equation}
holds on the regular part $\mathcal{R}$. Finally, we have non-negative scalar curvature $\Sc\geq 0$ on $\mathcal{R}_X$ and $(X,d)$ is either a metric cone or $\Sc>0$ on $\mathcal{R}_X$.
\end{theorem}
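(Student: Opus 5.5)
The plan is to reduce the statement to Bamler's partial regularity theory for non-collapsed $\mathbb{F}$-limits, applied to the tangent flow of Theorem~\ref{thm.Bamlerblowup1}. The only genuinely new inputs are a Nash entropy lower bound and an elementary maximum principle argument for the scalar curvature.

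\textbf{Step 1: non-collapsing.} I first check that the sequence $(M,g^i,\mu^i)$ is non-collapsed. Bamler's Nash entropy $\mathcal{N}_{(x,T)}(\tau)$ is monotone in $\tau$. Since the flow on $[0,T)$ has bounded curvature near $t=0$, Perelman's $\mu$-functional bounds $\mathcal{N}$ from below at scale $\tau\approx T$. Monotonicity then gives a uniform bound $\mathcal{N}\geq -Y$ at all smaller scales. This bound is scale invariant, so it passes to every $g^i$. This is the hypothesis under which Bamler's structure theory applies.

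\textbf{Step 2: regular/singular decomposition and product structure.} For $\mathbb{F}$-limits of $Y$-non-collapsed flows, Bamler's partial regularity gives the following. The limit splits as $\mathcal{R}\cup\mathcal{S}$, where $\mathcal{R}$ is open and dense and carries a smooth Ricci flow, and $\mathcal{S}$ has parabolic Minkowski codimension at least $4$, in particular measure zero. On $\mathcal{R}$ the $\mathbb{F}$-convergence is smooth within the correspondence, which follows from $\varepsilon$-regularity via the Nash entropy pinching. By Theorem~\ref{thm.Bamlerblowup1}, the limit is a metric soliton with constant Nash entropy. The metric soliton structure means that the maps $(x,t)\mapsto(x,\lambda t)$ preserve the flow up to parabolic scaling. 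From this I get the identification $\mathcal{X}^\infty\cong X\times(-\infty,0)$ with $d_t=\sqrt{|t|}\,d$, and correspondingly $\mathcal{R}=\mathcal{R}_X\times(-\infty,0)$ with $g^\infty_t=|t|g_X$.

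\textbf{Step 3: soliton equation.} Constancy of the Nash entropy makes the derivative of $\mathcal{N}$ vanish. By Perelman's entropy formula, the integrand
\[
\tau\bigl|\Ric+\nabla^2 f-\tfrac{1}{2\tau}g\bigr|^2 K
\]
then integrates to zero over $\mathcal{R}$, since $\mathcal{S}$ is null. To justify this, I pass the monotonicity formula to the limit using smooth convergence on $\mathcal{R}$ together with cutoff functions near $\mathcal{S}$, which are available because $\mathcal{S}$ has codimension at least $4$. This yields the gradient shrinking soliton equation on $\mathcal{R}$.

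\textbf{Step 4: scalar curvature.} For the original flow the maximum principle gives $\Sc\geq -\frac{n}{2t}$, which for $t$ bounded away from $0$ is bounded below. After rescaling by $\lambda_i\to\infty$ this lower bound tends to $0$, and smooth convergence on $\mathcal{R}$ gives $\Sc\geq0$ on $\mathcal{R}_X$. On a shrinking soliton the scalar curvature satisfies
\[
\Delta_f \Sc = \Sc-2|\Ric|^2
\]
up to normalisation. Hence $\Delta_f\Sc\leq \Sc$, and the strong maximum principle applies on $\mathcal{R}_X$. For this I need $\mathcal{R}_X$ to be connected, which follows from the codimension-$4$ bound on $\mathcal{S}$. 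So either $\Sc>0$ everywhere on $\mathcal{R}_X$, or $\Sc\equiv0$. In the second case the identity forces $\Ric\equiv0$, hence $\nabla^2 f=\frac12 g_X$ on $\mathcal{R}_X$. Then $f$ behaves like $\frac{r^2}{4}$ for a distance function $r$ from the minimum set of $f$. Using density of $\mathcal{R}_X$, this makes $(X,d)$ a metric cone.

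\textbf{Main obstacle.} The main difficulty is Step 2 together with the limit passage in Step 3: establishing smooth convergence on $\mathcal{R}$ and the codimension estimate for $\mathcal{S}$ is the heart of Bamler's theory. Following the paper's approach, I would cite this rather than reprove it. The cone conclusion in Step 4 also needs care in extending $\nabla^2 f=\frac12 g$ across $\mathcal{S}$, which again uses the smallness of $\mathcal{S}$.
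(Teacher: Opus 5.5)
Your overall route is the paper's route. The paper gives no argument of its own for this statement; it imports it wholesale from Bamler's structure theory for non-collapsed $\mathbb{F}$-limits (Theorems 2.4 and 2.18 of \cite{Bamler3}), and your Steps~1--3 are essentially a sketch of how Bamler obtains it. The following parts of your write-up are also correct:
\begin{itemize}
\item the derivation of $\Sc\geq 0$ on $\mathcal{R}_X$ by rescaling $\Sc\geq -\frac{n}{2t}$;
\item the strong maximum principle for $\Delta_f\Sc=\Sc-2\abs{\Ric}^2$ on a connected set;
\item Step~3, where no cutoffs near $\mathcal{S}$ are actually needed: once the entropy pinching is known, Fatou's lemma along the sequence plus smooth convergence on compact subsets of $\mathcal{R}$ already gives the soliton equation.
\end{itemize}

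Where you go beyond citation, in Step~4, your justifications are not sufficient as stated, and the same ingredient is missing in both places.

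\emph{Connectedness.} Connectedness of $\mathcal{R}_X$ does not follow from a codimension bound on $\mathcal{S}$. Take two copies of $\mathbb{R}^n\times(-\infty,0)$ glued along $\{0\}\times(-\infty,0)$: the singular set has parabolic codimension $n$, yet every regular time-slice is disconnected.

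\emph{Cone structure.} $\Ric\equiv 0$ and $\nabla^2 f=\frac12 g_X$ on a dense open set do not make $(X,d)$ a metric cone. Take $\mathbb{R}^n$ with two points identified: the regular part is dense and connected and carries $f=\abs{x}^2/4$, but the space is not a metric cone.

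What rules out both examples is Bamler's result that $d_t$ restricted to $\mathcal{R}_t$ is the length metric of $g_t$, i.e.\ that $(\mathcal{X}_t,d_t)$ is the metric completion of $(\mathcal{R}_t,d_{g_t})$.
\begin{itemize}
\item Since $d_t$ is finite, this gives curves inside $\mathcal{R}_t$ joining any two regular points. That is exactly the connectedness your maximum principle argument needs for the global alternative $\Sc>0$ or $\Sc\equiv 0$ on $\mathcal{R}_X$.
\item It is also the input, together with the smallness of $\mathcal{S}$ and the invariance of $\mathcal{R}_X$ under the soliton dilations, that lets one pass from the local cone structure of $(\mathcal{R}_X,g_X)$ to a cone structure on all of $X$.
\end{itemize}
These facts should be cited from Bamler rather than derived from density or a dimension count.

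A minor point concerns Step~1. Your lower bound via $\mu(g(0),\tau)$ works for closed $M$. In the complete non-compact setting of the paper, however, no injectivity radius bound is assumed, so $\mu(g(0),\tau)$ may be $-\infty$. There the non-collapsing at the base point should also be taken from Bamler's extension of Theorem~\ref{thm.Bamlerblowup1} to that setting.
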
 

\begin{remark}\label{remark.smoothconv}
In the preceding theorem, smooth convergence on $\mathcal{R}$ means in particular that there exists an increasing sequence $(U_i)_{i\in \mathbb{N}}$ of open subsets of $\mathcal{R}$ with $\bigcup_{i=1}^{\infty} U_i=\mathcal{R}$, open subsets $V_i \subset M \times I$, time-preserving diffeomorphisms $\phi_i: U_i \to V_i$, and a sequence $\varepsilon_i \searrow 0$ such that
\begin{equation*}
\norm{\phi_i^* g^i-g^{\infty}}_{C^{[\varepsilon_i^{-1}]}(U_i)} \leq \varepsilon_i,
\end{equation*}
where $g^i$ is the spacetime metric induced by $g^i(t)$. This implies classical Cheeger-Gromov convergence. Moreover, there are similar convergence results for the time-direction vector fields $\partial^i_\mathfrak{t}$ and the heat kernels $K^i$. See \cite[Theorem 9.21]{Bamler2} for more details.
\end{remark}

To finish this section, let us also recall the existence and structural result for blow-down limits at negative infinity (also called tangent flows at infinity) from Bamler \cite{Bamler3}, adapted here to the simpler case of a metric flow $\mathcal{X}$ induced by a fixed ancient Ricci flow. 

\begin{theorem}[Theorem 2.40 of \cite{Bamler3}]\label{thm.Bamlerinfinity}
Consider a metric flow $(\mathcal{X},\mathfrak{t},(d_t)_{t \in I}, (\nu_{\mathbf{x};s})_{\mathbf{x} \in \mathcal{X},s \in I, s \leq \mathfrak{t}(\mathbf{x}) } )$ associated to an ancient Ricci flow $(M,g(t))_{t\in(-\infty,0]}$. Fix a point $\mathbf{x}\in\mathcal{X}_0$ and consider the corresponding conjugate heat flow measures $(\nu_{\mathbf{x};s})_{s \leq 0}$. Define for a sequence of positive parameters $\lambda_i \rightarrow 0$, the rescaled flows
\begin{equation*}
g^i(t):=\lambda_i g(\lambda_i^{-1} t), \qquad\qquad t\in (-\infty,0],
\end{equation*}
as well as the correspondingly rescaled conjugate heat flow measures
\begin{equation*}
{\mu}^i_t:={\nu}_{\mathbf{x};\lambda_i^{-1}t}, \qquad\qquad t\in (-\infty,0].
\end{equation*}
Interpreting $(M,(g^i(t))_{t\in(-\infty,0]},({\mu}^i_t)_{t\in(-\infty,0]})$ as metric flow pairs, there exists a subsequence for which we have uniform $\mathbb{F}$-convergence on compact time intervals within a correspondence $\mathfrak{C}$ to a limit metric flow pair $(\mathcal{X}^\infty,(\mu^\infty_t)_{t\in (-\infty,0]})$ that is a metric soliton. Moreover, the Nash entropy along the conjugate heat flow $\mu^\infty_t$ is constant.
\end{theorem}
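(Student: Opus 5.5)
We will not reprove this statement from scratch. It is Bamler's result, and our plan is to check that the hypotheses of his compactness and rigidity machinery hold for the rescaled pairs coming from a fixed ancient flow. There are three steps: compactness, identification of the limit as a metric soliton, and constancy of the Nash entropy.

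\textbf{Compactness.} By Proposition~\ref{RFmetricflow}, each rescaled flow $(M,g^i(t))$ with the measures $\mu^i_t$ is an $H_n$-concentrated metric flow pair. $H$-concentration is invariant under parabolic rescaling, since $\Var$ and $t-s$ both scale by $\lambda_i$. Moreover, $\mu^i_t$ is a conjugate heat kernel based at the single point $\mathbf{x}$ at time $0$. Hence the centers of the $\mu^i_t$ stay in a controlled region, by the existence of $H_n$-centers and the bound $\Var(\mu^i_t)\leq H_n|t|$. We then apply Bamler's precompactness theorem for $H$-concentrated metric flow pairs with uniformly bounded variance on compact time intervals. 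This gives a subsequence converging uniformly on compact time intervals, in the $\mathbb{F}$-sense within a correspondence, to a limit pair $(\mathcal{X}^\infty,(\mu^\infty_t))$ over $(-\infty,0]$. A diagonal argument over an exhaustion of $(-\infty,0]$ by compact intervals handles the infinite time range.

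\textbf{Constant Nash entropy and the soliton property.} Let $\mathcal{N}_{\mathbf{x}}(\tau)$ be the Nash entropy of $\nu_{\mathbf{x};-\tau}$ on the original flow. By Perelman/Bamler monotonicity, $\mathcal{N}_{\mathbf{x}}$ is non-increasing in $\tau$, so $\mathcal{N}_\infty:=\lim_{\tau\to\infty}\mathcal{N}_{\mathbf{x}}(\tau)\in[-\infty,0]$ exists. Since the Nash entropy is scale invariant, the rescaled pair has Nash entropy $\mathcal{N}^i(\tau)=\mathcal{N}_{\mathbf{x}}(\lambda_i^{-1}\tau)$. For every fixed $\tau>0$ this tends to $\mathcal{N}_\infty$ as $\lambda_i\to0$.

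If $\mathcal{N}_\infty>-\infty$, we use that the Nash entropy passes to the limit under $\mathbb{F}$-convergence. Noncollapsing gives the smooth convergence on the regular part of Theorem~\ref{thm.Bamlerblowup2}, and the entropy is continuous under this convergence. Hence the limit conjugate heat flow has constant Nash entropy $\mathcal{N}_\infty$. Bamler's rigidity statement (equality in the monotonicity of the pointed $\mathcal{W}$-entropy) then forces the soliton identity $\Ric+\nabla^2 f=\frac{1}{2|t|}g$ on $\mathcal{R}$. This is what makes the limit a metric soliton.

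\textbf{Main obstacle.} The hard step is the passage of the Nash entropy to the limit, together with the rigidity statement. If $\mathcal{N}_\infty=-\infty$, the rescalings collapse. In that case the soliton conclusion is either vacuous or must be interpreted as in Bamler's statement, and we will treat it separately by following his conventions. More precisely, in the noncollapsed case one needs the integrated bound $\int_{\tau_1}^{\tau_2}\int \tau|\Ric+\nabla^2 f-\tfrac{1}{2\tau}g|^2\,d\mu\,d\tau\to0$ along the sequence. This follows from the entropy drop $\mathcal{N}_{\mathbf{x}}(\lambda_i^{-1}\tau_1)-\mathcal{N}_{\mathbf{x}}(\lambda_i^{-1}\tau_2)\to0$. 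The delicate part is transferring this bound to the singular limit, which requires Bamler's regularity theory. For that we cite \cite{Bamler3} directly, with the non-compact extension justified by the appendix of \cite{Bamler4}.
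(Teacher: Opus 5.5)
\textbf{Verdict: there is a genuine gap, in the collapsed case $\mathcal{N}_\infty=-\infty$.} Your reduction is the same as the paper's, which gives no argument beyond citing Theorem~2.40 of \cite{Bamler3} (with the appendix of \cite{Bamler4} for non-compact $M$). Your outline of the non-collapsed case is the right one.

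You leave the branch $\mathcal{N}_\infty=-\infty$ open (``we will treat it separately by following his conventions''), but there is nothing to follow. Bamler's Theorem~2.40, like all of \cite{Bamler3}, is stated under the non-collapsing hypothesis $\mathcal{N}_{\mathbf{x}}(\tau)\geq -Y$ for all $\tau>0$. Every tool you invoke for the limit is proved only under that hypothesis:
smooth convergence on $\mathcal{R}$; continuity of the Nash entropy under $\mathbb{F}$-convergence; the rigidity upgrading constant entropy to a metric soliton; and the structure in Theorem~\ref{thm.Bamlerblowup2}.

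Without the hypothesis, only the bare compactness theorem of \cite{Bamler2} survives. The limit can be lower dimensional. Moreover, $\mathcal{N}^i(\tau)=\mathcal{N}_{\mathbf{x}}(\lambda_i^{-1}\tau)\to-\infty$ for every fixed $\tau$, so there is no finite entropy for the limit to carry.

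Concretely, on $S^2\times T^2$ let $g(t)=2(1-t)g_{S^2}+g_{T^2}$ for $t\leq 0$.
\begin{itemize}
\item Every point is an ancient Type~I point: the curvature is spatially constant and comparable to $1/\abs{t}$, so $\abs{t}\,r_{\Rm}^{-2}(p,t)$ tends to a positive constant.
\item The flat torus factor forces $\mathcal{N}_{\mathbf{x}}(\tau)\to-\infty$.
\item The rescalings $\lambda_i g(\lambda_i^{-1}t)=2(\lambda_i-t)g_{S^2}+\lambda_i g_{T^2}$ collapse onto the two-dimensional shrinking sphere.
\end{itemize}
So the conclusions of Theorem~\ref{thm.Bamlerblowup2} fail for the tangent flow at infinity of this four-dimensional flow.

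Hence this case cannot be closed. The statement needs the hypothesis $\lim_{\tau\to\infty}\mathcal{N}_{\mathbf{x}}(\tau)>-\infty$, which is part of Bamler's theorem but was dropped in the paper's transcription. With it, the collapsed branch disappears and your proposal coincides with the citation the paper relies on.

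The same hypothesis is implicitly used where the paper applies Theorem~\ref{thm.Bamlerblowup2} to tangent flows at infinity in the proof of Theorem~\ref{ancientthm}. In the example above, the step upgrading pointed Gromov--Hausdorff convergence to smooth Cheeger--Gromov convergence near the basepoint fails, although the conclusion $\Sc(p,t)=1/(1-t)$ happens to hold there.

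A smaller point in your outline: the soliton identity on $\mathcal{R}$ does not by itself give the self-similar identification of the whole metric flow that Bamler's definition of a metric soliton requires. That upgrade is a separate step in \cite{Bamler3}, which you are also implicitly citing.
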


Note that also in this case Theorem \ref{thm.Bamlerblowup2} holds for the tangent flows at infinity $\mathcal{X}^\infty_{<0}$.

\section{Tangent Flows at Type~I Singular Points}\label{sec.blowup}

In the last section, we have seen Bamler's abstract construction of conjugate heat kernels based at the singular time $T$. Another definition of conjugate heat kernels based at the singular time (at a point $x \in M$) is given in the work of the first author and Mantegazza \cite{MM15}. It is obtained as a $C^{\infty}_{\mathrm{loc}}(M\times [0,T))$ limit of conjugate heat kernels $K(x,s_j; \cdot, \cdot)$ based at regular times $s_j \nearrow T$. These conjugate heat kernels based at $s_j$ exist by \cite[Chapter 24]{Chow10}, and a limit can be taken by a standard Schauder estimate argument, compare for example with Proposition 2.5 in \cite{BB25}. Note that the limit may depend on the chosen sequence $(s_j)$.

Assuming for the moment that such a sequence has been fixed, we write the limit as $\widetilde{K}(x,T; \cdot, \cdot)$, inducing the family of probability measures $\widetilde{\nu}_{(x,T);t}$ in the usual manner. We denote by $\widetilde{M}_T$ the set of all probability measures obtained this way. The following lemma will allow us to apply Bamler's results from Theorems~\ref{thm.Bamlerblowup1} and \ref{thm.Bamlerblowup2} to conjugate heat kernels in $\widetilde{M}_T$.

\begin{lemma}
With the above definitions, we have $\widetilde{M}_T \subseteq M_T$.
\end{lemma}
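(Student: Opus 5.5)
We need to show that the limit $\widetilde{\nu}_{(x,T);t}$ is a conjugate heat flow on $[0,T)$ with $\lim_{t\nearrow T}\Var(\widetilde{\nu}_{(x,T);t})=0$. The plan has three steps: establish the conjugate heat flow property, obtain a uniform variance bound from $H_n$-concentration, and pass to the limit.

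\emph{Conjugate heat flow property.} Write $\mu^j_t:=\nu_{(x,s_j);t}$ for $t\leq s_j$. By the reproduction formula, $\mu^j_{t_1}(A)=\int \nu_{\mathbf{y};t_1}(A)\,d\mu^j_{t_2}(\mathbf{y})$ for all $t_1\leq t_2<s_j$. Passing $j\to\infty$, the $C^\infty_{\mathrm{loc}}$ convergence of $K(x,s_j;\cdot,\cdot)$ implies that $\widetilde K(x,T;\cdot,\cdot)$ is a nonnegative solution of the conjugate heat equation. By Proposition~\ref{RFmetricflow}, $\widetilde\nu_{(x,T);t}$ is then a conjugate heat flow, provided each $\widetilde\nu_{(x,T);t}$ has total mass one, i.e.\ no mass escapes to infinity. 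I plan to get tightness from the variance bound below. The mass is at most one by Fatou's lemma, and equality follows from uniform concentration: mass near an $H_n$-center, combined with bounded curvature on $[0,t_2]$ to control where those centers sit. If $M$ is closed, this step is trivial.

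\emph{Variance bound.} Apply $H_n$-concentration with $\mathbf{x}=\mathbf{y}=(x,s_j)$, which gives
\begin{equation*}
\Var(\mu^j_t)\leq H_n(s_j-t)\qquad\text{for all } t<s_j.
\end{equation*}
Equivalently, if $z_j$ is an $H_n$-center of $(x,s_j)$ at time $t$, then $\mu^j_t\big(B_{g(t)}(z_j,\sqrt{2H_n(s_j-t)})\big)\geq 1/2$. The centers $z_j$ remain in a bounded set as $j$ varies. To see this, note that the distance $d_t(x,z_j)$ is controlled by comparing with the $H_n$-center of $(x,s_j)$ at time $s_j$, which is $x$ itself, using the monotonicity of $W_1$ (Lemma 2.7 of \cite{Bamler1}). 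Alternatively one can use Gaussian bounds under \eqref{eq.bddcurv}. This yields tightness.

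\emph{Passing to the limit.} Since $d_t^2$ is a continuous nonnegative function and the product measures $\mu^j_t\otimes\mu^j_t$ converge locally smoothly in density, lower semicontinuity (Fatou's lemma) gives
\begin{equation*}
\Var(\widetilde\nu_{(x,T);t})\leq\liminf_{j\to\infty}\Var(\mu^j_t)\leq H_n(T-t).
\end{equation*}
Letting $t\nearrow T$ shows the variance tends to $0$, so $\widetilde\nu_{(x,T);\cdot}\in M_T$.

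\emph{Main obstacle.} I expect the hardest part to be the mass-conservation (tightness) argument in the non-compact case. My first attempt would be to combine the uniform variance bound with the $H_n$-center localization at a fixed time $t$, and show that all $\mu^j_t$ put most of their mass in a fixed ball around $x$.
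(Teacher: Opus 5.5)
Your variance argument is correct, and it takes a slightly different route from the paper's. The paper first proves $W_1$-convergence $\nu_{(x,s_j);t}\to\widetilde{\nu}_{(x,T);t}$. For closed $M$ it uses dominated convergence and Kantorovich duality. In the non-compact case it upgrades vague to narrow convergence, then to $W_1$-convergence using the uniform bound $\Var(\nu_{(x,s_j);t})\le H_n(T-t)$ and Villani's criterion. It then applies lower semicontinuity of the variance under $W_1$-convergence. Your Fatou argument on the product densities needs only pointwise convergence of $K(x,s_j;\cdot,t)$ and skips the Wasserstein step entirely. What it does not give you is the $W_1$-convergence itself, which the paper cites again in the proof of Lemma~\ref{lemma.Hcenter} (although Fatou would handle that step too).

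The tightness paragraph, however, does not work. Monotonicity of $W_1$ does not show that the $H_n$-centers $z_j$ of $\nu_{(x,s_j);t}$ stay in a bounded set. That lemma compares two conjugate heat flows at common times. But $\delta_x$ at time $t$ is not the time-$t$ slice of any conjugate heat flow defined up to $s_j$, so you get no bound on $d_t(x,z_j)$. If you instead compare $\nu_{(x,s_j);\cdot}$ with $\nu_{(x,s_k);\cdot}$ for $k>j$, you only get $d^{g(t)}_{W_1}(\nu_{(x,s_j);t},\nu_{(x,s_k);t})\le d^{g(s_j)}_{W_1}(\delta_x,\nu_{(x,s_k);s_j})$. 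The right-hand side is again a displacement of the kernel relative to $x$, now over $[s_j,s_k]$, where by \eqref{eq.curvatureblowup} there is no uniform curvature control, so the argument is circular. Bounded curvature on $[0,t_2]$ only transports location information from time $t_2$ back to time $t$; it says nothing about where the centers sit at time $t_2$. Gaussian bounds coming from \eqref{eq.bddcurv} have constants depending on $\sup_{M\times[0,s_j]}\abs{\Rm}\to\infty$, and Bamler's curvature-free Gaussian bound is centred at the $H_n$-center, so it cannot locate that center. What you would really need is a displacement estimate at $x$ itself: that $(x,t)$ is an $H$-center of $(x,s)$ uniformly in $s$. The paper proves this only at Type~I points (Lemma~\ref{lemma.Hcenter}), using Perelman's reduced length and Harnack inequality.

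The paper does not prove mass one at all. It defines $\widetilde{M}_T$ as the set of \emph{probability} measures arising as such limits, and it relies on this explicitly (``Since all measures involved are probability measures\ldots''). So the cleanest fix is to drop your tightness paragraph and take mass one as part of the definition.

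If you want more, your variance bound already gives a dichotomy. Suppose $\widetilde{\nu}_{(x,T);t}$ has mass $m>0$, and choose a bounded open $U$ with $\widetilde{\nu}_{(x,T);t}(U)>m/2$. Then $\nu_{(x,s_j);t}(U)>m/2$ for large $j$, so some $y\in U$ satisfies $\int d_t^2(y,\cdot)\,d\nu_{(x,s_j);t}\le 3H_n(T-t)/m$. This gives tightness, hence $m=1$. The only remaining case is $\widetilde{K}(x,T;\cdot,t)\equiv 0$. Excluding it needs pointwise information at $x$, for example the on-diagonal bound $K(p,t_i;p,s)\ge e^{-C}(4\pi(t_i-s))^{-n/2}$ from the proof of Lemma~\ref{lemma.Hcenter} at a Type~I point $p$, which passes to the limit.
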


\begin{proof}
If $M$ is \emph{closed}, this follows quite directly. Indeed, in this case, take a fixed element $\widetilde{\nu}_{(x,T);t} \in \widetilde{M}_T$ coming from a sequence $s_j\nearrow T$. Then for fixed $t\in[0,T)$, an arbitrary point $y_0\in M$, and an arbitrary $1$-Lipschitz function $h$ (with respect to $(M,g(t))$), we obtain (compare with Lemma~3.1 of Hallgren \cite{Hal2})
\begin{align*}
\left|\int_M h \, d\nu_{(x,s_j);t} - \int_M h \, d \widetilde{\nu}_{(x,T);t}\right|
\leq \diam(M,g(t)) \int_M \bigl|K(x,s_j;y, t)-\widetilde{K}(x,T; y, t)\bigr| \, d \mu_{g(t)}(y).
\end{align*}
The right-hand side converges to zero as $j\to\infty$ by Lebesgue's theorem. By Kantorovich duality, after normalising the $1$-Lipschitz test functions (e.g. by $h(y_0)=0$ for some $y_0 \in M$), the previous estimate implies $W_1$-Wasserstein convergence
\begin{equation*}
d_{W_1}^{g(t)}\bigl(\nu_{(x,s_j);t},\widetilde{\nu}_{(x,T);t}\bigr) \to 0.
\end{equation*}
By \cite[Corollary 3.8]{Bamler1}, we have
\begin{equation}\label{eq.variancekernel}
\Var(\nu_{(x,s_j);t}) \leq H_n(s_j-t) \leq H_n(T-t)
\end{equation} 
for $H_n$ as in \eqref{defHn}, and the variance is lower semicontinuous under $W_1$-Wasserstein convergence, hence $\Var(\widetilde{\nu}_{(x,T);t})\leq H_n(T-t)$. In particular $\widetilde{\nu}_{(x,T);t} \in M_T$.

If $M$ is complete and non-compact, then \eqref{eq.variancekernel} still holds, but the above calculation showing $W_1$-Wasserstein convergence no longer works directly. However, if $h$ is a continuous function \emph{with compact support}, an obvious modification of that calculation yields
\begin{equation}\label{eq.weakconvergence}
\int_M h \, d\nu_{(x,s_j);t} - \int_M h \, d \widetilde{\nu}_{(x,T);t} \to 0,
\end{equation}
which shows weak* convergence. Since all measures involved are probability measures, this is equivalent to narrow convergence (i.e. \eqref{eq.weakconvergence} holds for any bounded continuous function). Using the additional uniform bound on the variance from \eqref{eq.variancekernel}, this implies the required $W_1$-Wasserstein convergence by Villani \cite[Theorem 7.12]{Vil03} (see also \cite[Lemma 2.9]{Bamler2}), finishing the proof also in this case.
\end{proof}

The following lemma allows us to recover pointed Gromov--Hausdorff convergence based at $H_n$-centers (or more generally $H$-centers for $H \geq H_n$). It is inspired by \cite{LW2} (see in particular Proposition~A.14), where the analogous result is obtained for metric flows induced by Ricci shrinkers, and the proof in our case follows almost the same way. 

\begin{lemma}[Pointed Gromov--Hausdorff convergence with respect to $H$-centers]\label{GH-conv}
In the situation of Theorem~\ref{thm.Bamlerblowup1} or \ref{thm.Bamlerinfinity}, for any $t \in (-\infty,0)$ at which the $\mathbb{F}$-convergence is time-wise (see Definition~6.1 in \cite{Bamler2}), and for any choice of $H_n$-centers $z_t^i \in \mathcal{X}_t^i$ of the conjugate heat flow measures $\mu_t^i$, there exists a point $z_t^\infty \in \mathcal{X}_t^\infty$ such that, up to taking a subsequence, we have pointed Gromov--Hausdorff convergence of $(\mathcal{X}_t^i,d_t^i,z_t^i)$ to $(\mathcal{X}_t^\infty,d_t^\infty,z_t^\infty)$. Moreover, the same is true for $H$-centers $\widetilde{z}_t^i$ of $\mu_t^i$ for any $H\geq H_n$.
\end{lemma}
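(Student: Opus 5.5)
We deduce the statement from the structure of $\mathbb{F}$-convergence at a time $t$ where it is time-wise. We combine this with the fact that $H_n$-centers of converging conjugate heat flows stay at bounded distance from "most" of the mass.

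\emph{Step 1: embedding into a common space.} By Definition~6.1 in \cite{Bamler2}, time-wise convergence at $t$ within the correspondence $\mathfrak{C}$ gives complete separable metric spaces $(Z_t,d^{Z_t})$ and isometric embeddings $\varphi^i_t:\mathcal{X}^i_t\to Z_t$, $\varphi^\infty_t:\mathcal{X}^\infty_t\to Z_t$. In $Z_t$ we have $d_{W_1}^{Z_t}\bigl((\varphi^i_t)_*\mu^i_t,(\varphi^\infty_t)_*\mu^\infty_t\bigr)\to 0$, after passing to a subsequence.

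\emph{Step 2: the centers converge.} Since $z^i_t$ is an $H_n$-center, Chebyshev gives $\mu^i_t\bigl(B(z^i_t,\sqrt{2H_n|t|})\bigr)\geq \tfrac12$; note that $\Var(\mu^i_t,\delta_{z^i_t})\le H_n|t|$ after rescaling. Let $\mu^\infty_t$ also have an $H_n$-center $z^\infty_t$. Weak convergence in $Z_t$ then forces $d^{Z_t}(\varphi^i_t(z^i_t),\varphi^\infty_t(z^\infty_t))$ to stay bounded: the $\tfrac12$-mass balls must intersect in the limit. Next we use the full support of $\mu^\infty_t$ and a compactness argument. By the lower semicontinuity of the variance under $W_1$ convergence, any subsequential limit point $z$ of $\varphi^i_t(z^i_t)$ satisfies $\Var(\mu^\infty_t,\delta_z)\le H_n|t|$. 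If we show $z$ lies in $\varphi^\infty_t(\mathcal{X}^\infty_t)$, then $z=\varphi^\infty_t(z^\infty_t)$ for some $H_n$-center $z^\infty_t$. The existence of the limit point is the main obstacle, because $Z_t$ need not be proper. To handle it we follow \cite[Prop.~A.14]{LW2}. The limit flow is a metric soliton with the structure of Theorem~\ref{thm.Bamlerblowup2}, and the Nash entropy is bounded below. Bamler's volume non-collapsing and measure estimates give uniform lower bounds on $\mu^i_t$ of small balls near the centers, and hence uniform total boundedness of $B(z^i_t,D)$ for every $D$. This precompactness yields the limit point, and the closedness of $\varphi^\infty_t(\mathcal X^\infty_t)\supseteq\supp$ places it in the image.

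\emph{Step 3: Gromov--Hausdorff convergence.} Once $\varphi^i_t(z^i_t)\to\varphi^\infty_t(z^\infty_t)$ in $Z_t$, pointed GH convergence follows from uniform total boundedness of the balls $B(z^i_t,D)$ (Step 2) together with the fact that, in $Z_t$, these balls Hausdorff-converge to $B(z^\infty_t,D)$. For Hausdorff convergence we argue in two directions. Every point of $\mathcal{X}^\infty_t$ is approximated because $\mu^\infty_t$ has full support and $W_1$ convergence transfers positive mass nearby. Conversely, points of $B(z^i_t,D)$ carry mass $\geq c(D)>0$ on small balls by the non-collapsing bound, so they must lie near $\supp\mu^\infty_t$. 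Gromov's criterion then gives the pointed convergence.

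\emph{Step 4: general $H\ge H_n$.} If $\widetilde z^i_t$ is an $H$-center, then
\[
d_t^i(\widetilde z^i_t,z^i_t)\le \sqrt{\Var(\delta_{\widetilde z^i_t},\mu^i_t)}+\sqrt{\Var(\mu^i_t,\delta_{z^i_t})}\le 2\sqrt{H|t|},
\]
which is uniformly bounded. Repeating Step 2 with the variance bound $H|t|$ gives a limit $\widetilde z^\infty_t$. Pointed convergence with base points changed by a bounded amount is equivalent to the original pointed convergence, which finishes the proof.
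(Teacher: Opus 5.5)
\textbf{Gap in Steps 2--3: the small-ball mass bound.} Your Steps~1 and~4 agree with the paper. Steps~2 and~3, however, rest on an estimate you neither prove nor can take from Bamler: a lower bound $\mu^i_t(B(y,\varepsilon))\ge c(\varepsilon,D)>0$, uniform in $i$, for all $y\in B(z^i_t,D)$. You use it three times:
\begin{enumerate}[i)]
\item for uniform total boundedness of the balls $B(z^i_t,D)$;
\item to place the subsequential limit $z$ of $\varphi^i_t(z^i_t)$ inside $\varphi^\infty_t(\mathcal{X}^\infty_t)$;
\item for the half of the Hausdorff convergence saying that points of $B(z^i_t,D)$ lie near $\supp\mu^\infty_t$.
\end{enumerate}
In ii), closedness of the image does not do this by itself. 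Nor does $\Var(\mu^\infty_t,\delta_z)\le H_n|t|$ force $z$ into $\supp(\varphi^\infty_t)_*\mu^\infty_t$.

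\textbf{Why the bound is unavailable.} Bamler's heat kernel upper bound (the Nash entropy being bounded below) gives $\mu^i_t\le C|t|^{-n/2}\,dV_{g^i(t)}$. So your mass bound would imply uniform lower volume bounds for $\varepsilon$-balls. But Bamler's lower volume bounds are of Perelman type and need $\Sc\le r^{-2}$ on the ball. You have no curvature control at scale $\varepsilon$ near high-curvature regions, such as those modelling the singular set $\mathcal{S}$ of the limit, or thin necks. A ball of radius $\varepsilon$ centred on a neck of radius $\rho\ll\varepsilon$ has volume $\lesssim\varepsilon\rho^{n-1}$, hence negligible $\mu^i_t$-mass.

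\textbf{Circularity.} Your bound is essentially a quantitative form of the statement that no region of negligible mass lies within bounded distance of the centre. That is exactly the information $\mathbb{F}$- and $GW_1$-convergence discard. $W_1$-convergence only yields the other half of the Hausdorff convergence, which you handle correctly via full support. Assuming the bound therefore amounts to assuming the delicate part of what is to be proved.

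\textbf{How the paper proceeds.} The paper makes no small-scale non-collapsing claim. It attributes Gromov--Hausdorff convergence of the slices to time-wise $\mathbb{F}$-convergence, via $GW_1$-convergence and full support of the measures (\cite[Remark~5.6, Lemma~2.13]{Bamler2}). It uses Chebyshev only to control the basepoints. Given couplings $q_i$ with $\int d_{Z_t}\,dq_i\to 0$, it shows that the $r$-balls around $z^i_t$ and around an $H_n$-center of $\mu^\infty_t$ contain points $y_i\to y_\infty$. It then changes basepoints by at most $r$, so the centers themselves never need to converge in $Z_t$.

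\textbf{A fixable slip.} Your bounded-distance argument in Step~2 is this Claim, but with radius $\sqrt{2H_n|t|}$ Chebyshev only gives mass $\ge\tfrac12$. Two sets of mass $\tfrac12$ need not meet, and the coupling bound then only gives $q_i(Y_i\times Y_\infty)\ge 0$. The paper takes $r=\sqrt{10H_n|t|}$, so that $q_i(Y_i\times Y_\infty)\ge\tfrac45$. This contradicts $\int d_{Z_t}\,dq_i\to0$ if the two balls stay $\varepsilon$ apart.
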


\begin{proof}
Let $\mathfrak C$ be the correspondence given by Theorem~\ref{thm.Bamlerblowup1} or \ref{thm.Bamlerinfinity}, and for the fixed time $t$ write
\begin{equation*}
\phi_t^i: (\mc X_t^i,d_t^i)\to (Z_t,d_{Z_t}), \qquad \phi_t^\infty:(\mc X_t^\infty,d_t^\infty)\to (Z_t,d_{Z_t})
\end{equation*}
for the corresponding isometric embeddings into the common metric space $Z_t$. Since the $\mathbb F$-convergence is time-wise at time $t$, the metric flow pairs converge within the correspondence over the singleton $\{t\}$ in the sense of \cite[Definition~6.1]{Bamler2}. Hence, by \cite[Lemma~6.9]{Bamler2}, the pushed-forward conjugate heat flow measures satisfy $(\phi_t^i)_*\mu_t^i \to (\phi_t^\infty)_*\mu_t^\infty$
in the $W_1$-Wasserstein sense on $Z_t$. Hence, after passing to a subsequence, there exist couplings $q_i\in \mc P(Z_t\times Z_t)$
of $(\phi_t^i)_*\mu_t^i$ and $(\phi_t^\infty)_*\mu_t^\infty$ such that
\begin{equation}\label{eq.coupling}
\int_{Z_t\times Z_t} d_{Z_t}(u,v)\,dq_i(u,v)\to 0.
\end{equation}
By Remark~5.6 of \cite{Bamler2}, time-wise $\mathbb F$-convergence at time $t$ thus implies
\begin{equation*}
d_{GW_1}\Big((\mc X_t^i,d_t^i,\mu_t^i),(\mc X_t^\infty,d_t^\infty,\mu_t^\infty)\Big)\to 0.
\end{equation*}
Since all measures in a metric flow pair have full support, \cite[Lemma~2.13]{Bamler2} shows that, after passing to a subsequence, the embedded supports
\begin{equation*}
\phi_t^i(\mc X_t^i)=\supp\big((\phi_t^i)_*\mu_t^i\big)
\qquad\text{and}\qquad
\phi_t^\infty(\mc X_t^\infty)=\supp\big((\phi_t^\infty)_*\mu_t^\infty\big)
\end{equation*}
converge in the ambient space $Z_t$. In particular, we conclude that we have Gromov--Hausdorff convergence of the time-slices (of a subsequence). 

It remains to show that the chosen $H_n$-centers do not diverge along the limiting procedure. To do so, pick an $H_n$-center $z_t^{\prime\infty}\in \mc X_t^\infty$ of the limit conjugate heat flow $\mu_t^\infty$. Then, following the same lines as in \cite{LW2}, set
\begin{equation*}
r:=\sqrt{10H_n|t|}, \qquad Y_i:=\phi_t^i\big(B_t^i(z_t^i,r)\big), \qquad Y_\infty:=\phi_t^\infty\big(B_t^\infty(z_t^{\prime\infty},r)\big).
\end{equation*}
\textbf{Claim.} There exist points $y_i\in Y_i$ and $y_\infty\in Y_\infty$ such that $y_i\to y_\infty$ in $Z_t$. 
\begin{proof}
Suppose towards a contradiction that the claim is false. Then after passing to a subsequence, there exists $\varepsilon>0$ such that $d_{Z_t}(u,v)\geq \varepsilon$ for all $(u,v)\in Y_i\times Y_\infty$. Then on the one hand, the couplings $q_i$ from \eqref{eq.coupling} satisfy (by the definition of an $H_n$-center and Chebyshev's inequality),
\begin{equation*}
(\phi_t^i)_*\mu_t^i(Z_t\setminus Y_i) = \mu_t^i\big(\mc X_t^i\setminus B_t^i(z_t^i,r)\big) \leq \frac{1}{r^2}\Var(\mu_t^i,\delta_{z_t^i}) \leq \frac{1}{r^2} H_n|t| =\frac{1}{10},
\end{equation*}
and similarly
\begin{equation*}
(\phi_t^\infty)_*\mu_t^\infty(Z_t\setminus Y_\infty) \leq \frac{1}{10},
\end{equation*}
which yields
\begin{align*}
q_i(Y_i\times Y_\infty)
&\geq 1-q_i((Z_t\setminus Y_i)\times Z_t)-q_i(Z_t\times (Z_t\setminus Y_\infty))\\
&=1-(\phi_t^i)_*\mu_t^i(Z_t\setminus Y_i)-(\phi_t^\infty)_*\mu_t^\infty(Z_t\setminus Y_\infty)\\
&\geq 1-\frac{1}{10}-\frac{1}{10}=\frac{4}{5}.
\end{align*}
On the other hand, we find
\begin{equation*}
q_i(Y_i\times Y_\infty) \leq \frac{1}{\varepsilon}\int_{Y_i\times Y_\infty} d_{Z_t}(u,v)\,dq_i(u,v) \leq \frac{1}{\varepsilon}\int_{Z_t\times Z_t} d_{Z_t}(u,v)\,dq_i(u,v) \to 0,
\end{equation*}
yielding the desired contradiction and thus proving the claim.
\end{proof}

Let now $y_i\in Y_i$ and $y_\infty\in Y_\infty$ be as in the claim and define
\begin{equation*}
z_t^{\prime\, i}:=(\phi_t^i)^{-1}(y_i), \qquad z_t^{\prime\prime\infty}:=(\phi_t^\infty)^{-1}(y_\infty).
\end{equation*}
By the convergence of the embedded time-slices in $Z_t$, we obtain pointed Gromov--Hausdorff convergence
\begin{equation*}
(\mc X_t^i,d_t^i,z_t^{\prime\, i})\to (\mc X_t^\infty,d_t^\infty,z_t^{\prime \prime\, \infty}).
\end{equation*}
Moreover, as $y_i\in Y_i$ and $y_\infty\in Y_\infty$, we also have $d_t^i(z_t^i,z_t^{\prime\, i})\leq r$ and $d_t^\infty(z_t^{\prime\infty},z_t^{\prime \prime\, \infty})\leq r$. Thus, we may change basepoints and conclude that there exists a point $z_t^\infty\in \mc X_t^\infty$ such that, after passing to a subsequence,
\begin{equation*}
(\mc X_t^i,d_t^i,z_t^i)\to (\mc X_t^\infty,d_t^\infty,z_t^\infty)
\end{equation*}
in the pointed Gromov--Hausdorff sense, proving the first statement of the lemma.

For the second statement, let $\widetilde z_t^i\in \mc X_t^i$ be $H$-centers of $\mu_t^i$ for some fixed $H\geq H_n$, and let $z_t^i$ be any $H_n$-centers. By the triangle inequality for $d_{W_1}$ and Lemma~2.8 from \cite{Bamler2}, we have
\begin{align*}
d_t^i(\widetilde z_t^i,z_t^i)
&=d_{W_1}^{d_t^i}(\delta_{\widetilde z_t^i},\delta_{z_t^i})\\
&\leq d_{W_1}^{d_t^i}(\delta_{\widetilde z_t^i},\mu_t^i)+d_{W_1}^{d_t^i}(\mu_t^i,\delta_{z_t^i})\\
&\leq \sqrt{\Var(\mu_t^i,\delta_{\widetilde z_t^i})}+\sqrt{\Var(\mu_t^i,\delta_{z_t^i})}\\
&\leq \sqrt{H|t|}+\sqrt{H_n|t|}
\leq 2\sqrt{H|t|}.
\end{align*}
So the $H$-centers stay within a uniformly bounded distance of the corresponding $H_n$-centers, and the same change of basepoint argument as above yields the desired pointed Gromov--Hausdorff convergence for the sequence $(\widetilde z_t^i)$ as well.
\end{proof}

\begin{remark}
One could also infer the Gromov--Hausdorff convergence of the time-slices in the first part of the proof from the smooth convergence on the regular part obtained by Bamler's structure theory, together with the fact that the singular time-slice is the metric completion of the regular part; compare with Theorem~\ref{thm.Bamlerblowup2} and Remark~\ref{remark.smoothconv}.
\end{remark}

\begin{remark}
We only stated the result for tangent flows at finite time singularities or tangent flows at infinity in order to simplify the statement (and since this is sufficient for our purpose). However, one can prove an analogous result for $\mathbb{F}$-convergence of \emph{any} sequence of metric flow pairs $(\mathcal{X}^i,(\mu^i_t))$ to a metric flow pair $(\mathcal{X}^\infty,(\mu^\infty_t))$, assuming that all metric flows are $H$-concentrated for a uniform $H>0$, all $\mu^i_t$ (including $\mu^\infty_t$) are probability measures, and we have Gromov--Hausdorff convergence of a time-slice where the $\mathbb{F}$-convergence is time-wise. In this case, one obtains pointed Gromov--Hausdorff convergence with respect to $H'$-centers ($H' \geq H$) in that slice.
\end{remark}

We remark that by Lemma~6.3 in \cite{Bamler2}, the $\mathbb{F}$-convergence is certainly time-wise outside a set of measure zero. However, for our intended application, it is important to apply Lemma~\ref{GH-conv} at any time-slice; otherwise we would only obtain that the scalar curvature blows up at a Type~I rate along a sequence of times. For this reason, we need to show that the $\mathbb{F}$-convergence is time-wise at \emph{all} times $t$. By Theorem~7.6 in \cite{Bamler2}, the convergence is time-wise at any time at which $\mathcal{X}^\infty$ is continuous. Moreover, it is uniform on any interval of times consisting only of continuity times for $\mathcal{X}^\infty$. (Notice that in these results we do not care about the conjugate heat kernel $(\mu_t^\infty)$). The following lemma shows that in our case the limit $\mathcal{X}^\infty$ is continuous at all times.

\begin{lemma}[Continuity of the tangent flow]\label{lemma.continuity}
In the situation described in Theorems~\ref{thm.Bamlerblowup1} and \ref{thm.Bamlerinfinity}, the tangent metric flow $\mathcal{X}^\infty$ is continuous at any time $t \in (-\infty,0)$ and therefore the $\mathbb{F}$-convergence is time-wise for any $t \in (-\infty,0)$.
\end{lemma}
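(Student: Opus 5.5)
The plan is to use the fact, from Theorem~\ref{thm.Bamlerblowup1} (resp.\ Theorem~\ref{thm.Bamlerinfinity}), that $\mathcal{X}^\infty$ is a metric soliton, so it is self-similar under parabolic rescaling. By Theorem~\ref{thm.Bamlerblowup2} it can be written as the product $X\times(-\infty,0)$ with $(\mathcal{X}^\infty_t,d_t)=(X,\sqrt{|t|}\,d)$. Continuity at a time $t_0<0$, in the sense of Bamler (Definition~4.25 in \cite{Bamler2}), requires two things. First, the map $t\mapsto \int_{\mathcal{X}^\infty_{t}}\!\int d_t\, d\nu_{\mathbf{x};t}\,d\nu_{\mathbf{y};t}$, or equivalently the $W_1$-distances between conjugate heat kernels, must vary continuously near $t_0$. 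Second, the heat flow of bounded functions must be continuous in time. I would check both properties directly from the product structure.

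\textbf{Step 1: exact scaling of the conjugate heat kernels.} For a metric soliton (Definition~3.57 in \cite{Bamler2}), the time-slices are related by the dilation maps $\mathcal{X}^\infty_{t}\to\mathcal{X}^\infty_{\lambda t}$. These maps scale $d_t$ by $\sqrt{\lambda}$ and push conjugate heat kernels $\nu_{\mathbf{x};s}$ forward to $\nu_{\lambda\mathbf{x};\lambda s}$. Consequently, for points $\mathbf{x},\mathbf{y}$ of a fixed slice, every quantity $d^{\,t}_{W_1}(\nu_{\mathbf{x};t},\nu_{\mathbf{y};t})$ at a time $t<\mathfrak{t}(\mathbf{x})$ can be expressed through a reference quantity at a rescaled pair of times. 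Continuity in $t$ then reduces to continuity in the rescaling parameter $\lambda$.

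\textbf{Step 2: reduction to continuity in $\lambda$.} Since the $d_t$ are simply multiples of one fixed metric $d$, all $W_1$-distances depend on $\lambda$ through the explicit factor $\sqrt{\lambda}$. They also depend on it through the kernels themselves. To handle the kernels I would use the monotonicity of $W_1$ along conjugate heat flows (Lemma~2.7 of \cite{Bamler1}, valid in $H$-concentrated metric flows as in Proposition~4.2 of \cite{Bamler2}). This gives $W_1$ control of $\nu_{\mathbf{x};s}$ versus $\nu_{\mathbf{x};s'}$ transported via the reproduction formula, and hence equicontinuity in time. Combined with $H_n$-concentration and the reproduction formula, it yields the estimate
\[
d^{\,s}_{W_1}\bigl(\nu_{\mathbf{x};s},\nu_{\mathbf{x}';s}\bigr)\le C\sqrt{|\mathfrak{t}(\mathbf{x})-\mathfrak{t}(\mathbf{x}')|}
\]
for points $\mathbf{x},\mathbf{x}'$ that are related by the dilation, i.e.\ points corresponding to each other along the soliton's "worldlines", and for all $s$ below both times. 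From this estimate, both continuity conditions follow. Alternatively, a metric flow is known to be continuous away from a countable set of times (Theorem~4.31 in \cite{Bamler2}). Self-similarity maps discontinuity times to discontinuity times by $t\mapsto\lambda t$ for every $\lambda>0$. A single discontinuity time $t_0<0$ would therefore force every $t<0$ to be one, contradicting countability. I would use this second argument as the main proof, since it avoids analysis.

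\textbf{Step 3: conclusion.} Once continuity holds at every $t<0$, Theorem~7.6 of \cite{Bamler2}, as quoted in the paper, gives time-wise and locally uniform $\mathbb{F}$-convergence at every time. This is exactly the claim.

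The main obstacle is verifying rigorously that the dilation maps of a metric soliton are isomorphisms of metric flows over the rescaled time intervals. This includes checking that they preserve Bamler's notion of continuity, so that the set of discontinuity times is genuinely invariant under $t\mapsto\lambda t$. I expect this to follow from the definition of metric soliton together with the identification in Theorem~\ref{thm.Bamlerblowup2}, after a careful unwinding of definitions.
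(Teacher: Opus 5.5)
Your main argument is correct, but it takes a different route from the paper.

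\textbf{The paper's route.} The paper checks Bamler's continuity criterion (Theorem~4.9 of the compactness paper) directly at every time. Continuity at $t_0$ holds as soon as some conjugate heat flow with full support and finite variance has a variance-type function continuous at $t_0$. For the soliton's own conjugate heat flow, which is time-independent under the identification $\mathcal{X}^\infty_{<0}\cong X\times(-\infty,0)$ with $d_t=\sqrt{|t|}\,d$, this function is a constant multiple of a power of $|t|$. It is therefore continuous at every $t_0<0$ at once.

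\textbf{Your route.} You argue softly instead. The set $D$ of discontinuity times is countable, and the dilations $(x,t)\mapsto(x,\lambda t)$ are isometries between $\mathcal{X}^\infty_{<0}$ and its parabolic rescalings. Hence $\lambda D=D$ for all $\lambda>0$, and since $\{\lambda t_0:\lambda>0\}=(-\infty,0)$ for any $t_0<0$, this forces $D=\emptyset$. Note that the countability itself comes from the same Theorem~4.9 criterion, combined with monotonicity of $t\mapsto\Var(\mu^\infty_t)+H_n t$ in an $H_n$-concentrated flow. So your proof is the paper's ingredient plus an abstract invariance step.

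\textbf{Trade-offs.} Your route needs no computation and no choice of a particular conjugate heat flow, so it works verbatim for any self-similar metric flow. The cost is that it uses more structure: equivariance of \emph{all} conjugate heat kernels $\nu_{\mathbf{x};s}$ under the dilations, which is part of the metric-soliton structure. It also needs invariance of Bamler's notion of continuity under isometries and parabolic rescalings, which is the ``main obstacle'' you flagged and is indeed just an unwinding of definitions. The paper, by contrast, only uses the scaling of $d_t$ and the time-independence of a single conjugate heat flow.

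\textbf{Drop the Step~2 estimate.} The $W_1$ estimate along ``worldlines'' sketched in Step~2 is not justified as stated. It would require a bound on $\Var(\nu_{(x,t);t'},\delta_{(x,t')})$ of order $|t-t'|$, uniform in $x$, and self-similarity alone does not give this. Your main argument never uses it, so you can simply remove it.
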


\begin{proof}
From one of the equivalent statements in \cite[Theorem~4.9]{Bamler2}, it suffices, for any $t_0 \in (-\infty,0)$, to find a conjugate heat flow $(\mu_t)$ with full support and finite variance, defined around $t_0$, such that the function
\begin{equation}
t \mapsto \int_{\mathcal{X}^\infty_t} \int_{\mathcal{X}^\infty_t} d_t(x,y) \, d\mu_t(x)\, d\mu_t(y)
\end{equation}
is continuous at time $t_0$. Choose $(\mu_t)$ to be a conjugate heat flow $(\nu_{(x,0);t})$ based at some point $x \in \mathcal{X}^\infty_0$. By the precise structure of $\mathcal{X}^\infty$ from Theorem~\ref{thm.Bamlerblowup2}, under the identification $\mathcal{X}_{<0}^\infty \cong X \times (-\infty,0)$, the conjugate heat flow based at $x\in \mathcal{X}_0^\infty$ is time-independent, $\nu_{(x,0);t}=\mu$, and the slice metrics satisfy $d_t=\sqrt{|t|}\,d$.
Therefore
\begin{equation}
t \mapsto \int_{X} \int_{X} \sqrt{|t|}\, d(x,y)\, d\mu(x)\, d\mu(y),
\end{equation}
which is clearly continuous at any $t_0 \in (-\infty,0)$.
\end{proof}

We have so far obtained pointed Gromov-Hausdorff convergence based at $H$-centers for \emph{all} times $t$. The next lemma shows that we can use a \emph{fixed Type~I singular point $p$} as the base-point for the pointed Gromov-Hausdorff convergence in Lemma~\ref{GH-conv}.

\begin{lemma}[$H$-center]\label{lemma.Hcenter}
If $p \in \Sigma_I$ is a Type~I point, then there exist a positive constant $H$ depending only on the Type~I constant $C_I(p)$ from \eqref{eq.typeI} and the dimension $n$, and a time $T_0 \in [0,T)$ depending on $p$, such that the following holds:
for all $s\in (T_0,T)$, the point $(p,s)$ is an $H$-center of $(p,T)$ in the sense that
\begin{equation*}
\Var\bigl(\widetilde{\nu}_{(p,T);s},\delta_{(p,s)}\bigr)  \leq H(T-s).
\end{equation*}
\end{lemma}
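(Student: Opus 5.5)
The plan is to reduce the statement to a uniform bound on how far the point $(p,s)$ can drift from the conjugate heat kernel's "center of mass". For any $s<s_j<T$, Bamler's estimate \cite[Corollary 3.8]{Bamler1} gives $\Var(\nu_{(p,s_j);s}) \leq H_n(s_j-s)$. Concentration then yields $H_n$-centers $z_{j,s}$ of $(p,s_j)$ at time $s$. Passing to the limit $s_j\nearrow T$ as in the proof of $\widetilde{M}_T\subseteq M_T$ (using $W_1$-convergence together with lower semicontinuity), it suffices to bound, uniformly in $j$ and for $s$ close to $T$,
\begin{equation*}
\Var\bigl(\nu_{(p,s_j);s},\delta_{(p,s)}\bigr) \leq 2\Var\bigl(\nu_{(p,s_j);s},\delta_{z}\bigr) + 2 d_s^2(z,p),
\end{equation*}
where $z$ is any $H_n$-center. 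So the key quantity is the distance $d_{g(s)}(p, z)$ between $p$ and an $H_n$-center of $(p,s_j)$ at time $s$. We want to show it is at most $C\sqrt{T-s}$.

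To bound this distance I would use the Type~I hypothesis at $p$. By \eqref{eq.typeI}, for $t\ge T_0$ we have $r_{\Rm}(p,t)\geq \sqrt{(T-t)/(2C_I)}$, so $\abs{\Rm}\leq 2C_I/(T-t)$ on $\mathcal{P}(p,t,\sqrt{(T-t)/2C_I})$. In particular $\abs{\Ric}\le C/(T-t)$ along the worldline of $p$ and in a parabolic neighbourhood of comparable size. The standard way to locate $H$-centers is Bamler's lemma comparing the center with the basepoint using a lower bound on the heat kernel. I would first try Perelman's pseudolocality-free approach via the reduced distance: the bounded curvature on $\mathcal{P}(p,t,r)$ gives $\ell_{(p,s_j)}(p,s)\le C$, by testing with the constant curve at $p$ over each dyadic interval $[T-2^{-k},T-2^{-k-1}]$. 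The cost is $\int \sqrt{\tau}\,\Sc\,d\tau \lesssim \sqrt{\tau}\cdot C_I$. This needs care, because the interval $[s,s_j]$ spans many dyadic scales. Integrating $\sqrt{s_j-t}\,\Sc(p,t)$ with $\Sc(p,t)\le C/(T-t)\le C/(s_j-t)$ still gives $\lesssim\sqrt{s_j-s}$, which is exactly scale-invariant, so $\ell\le C$.

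Perelman's heat kernel lower bound $K(p,s_j;\cdot,s)\ge (4\pi\tau)^{-n/2}e^{-\ell}$ then gives, in a ball $B_{g(s)}(p,c\sqrt{\tau})$ on which $\ell$ stays bounded (by the curvature bounds, $\ell$ is Lipschitz there), mass of $\nu_{(p,s_j);s}$ at least $c'\,\mathrm{vol}(B)\tau^{-n/2}$. I expect the main obstacle to be the lower volume bound for this ball. I would get it from Perelman's no-local-collapsing at scale $\sqrt{\tau}$, which applies since $\abs{\Rm}\le C/\tau$ there and the flow is smooth on $[0,s_j]$; the constant depends on $T$ and $g(0)$. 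Alternatively, Bamler's Nash entropy bound $\mathcal{N}\ge -C$ on a finite-time flow would serve, and the final constant $H$ would then depend only on $n$ and $C_I$ after using Bamler's $\varepsilon$-regularity formulation. Once $\nu_{(p,s_j);s}(B_{g(s)}(p,c\sqrt\tau))\ge\beta>0$, Chebyshev applied to the $H_n$-center $z$ forces $d_s(z,p)\le c\sqrt\tau+\sqrt{H_n\tau/\beta}$. This gives $\Var(\nu_{(p,s_j);s},\delta_{(p,s)})\le H(s_j-s)\le H(T-s)$, and letting $j\to\infty$ concludes.
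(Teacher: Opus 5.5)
Your argument is correct in outline but takes a different route from the paper at the key step, which is locating an $H_n$-center $z$ of $(p,s_j)$ relative to $p$.

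\textbf{Comparison of the two routes.} Both arguments start from the constant-path bound $\ell\leq C$ and Perelman's inequality $f\leq\ell$. The paper uses this only at the single point $p$. It gets the on-diagonal bound $K(p,t_i;p,s)\geq e^{-C}(4\pi(t_i-s))^{-n/2}$ and plays it against Bamler's Gaussian upper bound centred at the $H_n$-center (Theorem~7.2 of Bamler's first paper, following Li--Wang). This forces $d_s(z,p)\leq C\sqrt{t_i-s}$ at once.

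You instead spread the lower bound over a ball $B_{g(s)}(p,c\sqrt{\tau})$, turn it into a mass bound $\nu_{(p,s_j);s}(B)\geq\beta$ using a volume lower bound, and finish with Chebyshev. This avoids Bamler's heat kernel estimates. The price is that you need control of $\ell$ on a whole ball and a non-collapsing estimate.

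\textbf{Smaller fixes.} Your claim that ``$\ell$ is Lipschitz'' on the ball is not justified as stated. Instead, bound $\ell$ directly with test curves that stay at $p$ until time $s+\delta(T-s)$ and then move inside the Type~I cylinder $\mathcal{P}(p,s,\sqrt{(T-s)/2C_I})$. This gives $\ell\leq C(n,C_I)$ on $B_{g(s)}(p,c\sqrt{T-s})$. In the Chebyshev step, take $r^2=2H_n\tau/\beta$ so the two balls are forced to meet. Your passage to the limit, via lower semicontinuity of $\mu\mapsto\int d_s^2(\cdot,p)\,d\mu$ under narrow convergence, is cleaner than the paper's triangle-inequality argument and loses no constant.

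\textbf{Caveats on the volume bound.} First, Perelman's no-local-collapsing with $\kappa=\kappa(g(0),T)$ is available for closed $M$. For complete non-compact $M$ satisfying \eqref{eq.bddcurv} it can fail globally, since there is no injectivity radius bound at time $0$. You therefore need a localised version. For example, bound the reduced volume based at $(p,s)$ from below by controlling $\ell_{(p,s)}$ on $B_{g(0)}(p,1)$ with constant-path test curves, using \eqref{eq.bddcurv} on the early time interval.

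Second, and more importantly, your $H$ then depends on $\kappa$, i.e.\ on the flow near $p$, and not only on $n$ and $C_I$ as the lemma asserts. Your remark that a Nash entropy bound plus $\varepsilon$-regularity would remove this dependence is unsupported, because such a Nash entropy lower bound is itself flow-dependent. So as written you prove the lemma with a weaker constant dependence.

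This is harmless for the proof of Theorem~\ref{mainthm}, where only independence of $H$ from $s$ and from the rescaling index is used. Note also that Bamler's Gaussian upper bound carries a factor $e^{-\mathcal{N}_{(p,t_i)}(t_i-s)}$, so the paper's route tacitly needs a Nash entropy lower bound as well.
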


\begin{proof}
We can adopt the proof from the Ricci shrinker case by the first author with Bertellotti (see Proposition~6.1 in \cite{BB25}), which itself is an extension of a result by Li-Wang for regular heat kernels (Proposition 4.4 in \cite{LW2}) to heat kernels based at the singular time.

Fix $s<T$ and let $t_i \nearrow T$ be a sequence defining the singular-time conjugate heat kernel
based at $(p,T)$, so that $\nu_{(p,t_i);s} \to \widetilde{\nu}_{(p,T);s}$ in $W_1$ as explained at the beginning of this section. Since $p \in \Sigma_I$, there is a constant $C<\infty$ (depending only on $C_I(p)$ and $n$) such that, after possibly shrinking the time interval towards $T$, we have
\begin{equation*}
R(p,\tau)\leq \frac{C}{T-\tau} \leq \frac{C}{t_i-\tau} , \qquad \forall \tau\in [s,t_i).
\end{equation*}
For every $i$ large enough, we estimate Perelman's reduced length from $(p,t_i)$ to $(p,s)$ by means
of the constant path $\gamma(\tau)\equiv p$, obtaining
\begin{equation*}
\ell_{(p,t_i)}(p,s) \leq \frac{1}{2\sqrt{t_i-s}}\int_s^{t_i}\sqrt{t_i-\tau}\,R(p,\tau)\,d\tau \leq C.
\end{equation*}
Hence, writing
\begin{equation*}
d\nu_{(p,t_i);s}=K(p,t_i;\cdot,s)\,dV_{g(s)}
=(4\pi (t_i-s))^{-\frac n2}e^{-f_i(\cdot,s)}\,dV_{g(s)},
\end{equation*}
Perelman's differential Harnack inequality for the conjugate heat kernel
\cite[Section~9]{Per} yields
\begin{equation*}
f_i(p,s)\leq \ell_{(p,t_i)}(p,s)\leq C.
\end{equation*}
Therefore we obtain the on-diagonal lower bound
\begin{equation*}
K(p,t_i;p,s)\geq e^{-C}(4\pi (t_i-s))^{-\frac n2}.
\end{equation*}
By the same argument as in Proposition~4.4 of \cite{LW2}, such an on-diagonal lower bound, together with Bamler's heat kernel upper bound from Theorem 7.2 in \cite{Bamler1}, implies that
$(p,s)$ is an $H_0$-center of $(p,t_i)$ for some constant
$H_0=H_0(n,C_I)$ independent of $i$, that is,
\begin{equation*}
\Var_s\!\big(\nu_{(p,t_i);s},\delta_p\big)\leq H_0\,(t_i-s).
\end{equation*}

We now pass to the singular-time heat kernel. By the triangle inequality for $\sqrt{\Var_s}$,
\cite[Lemmas~2.8 and 2.9]{Bamler2}, and the uniform variance bounds
\begin{equation*}
\Var_s\!\big(\nu_{(p,t_i);s}\big)\leq H_n\,(t_i-s), \qquad \Var_s\!\big(\widetilde{\nu}_{(p,T);s}\big)\leq H_n\,(T-s),
\end{equation*}
we obtain
\begin{align*}
\Var_s\!\big(\widetilde{\nu}_{(p,T);s},\delta_p\big) &\leq 2\Var_s\!\big(\nu_{(p,t_i);s},\delta_p\big) +2 \Var_s\!\big(\nu_{(p,t_i);s},\widetilde{\nu}_{(p,T);s}\big) \\
&\leq 2H_0\,(t_i-s) +4 d_{W_1}^{g(s)}\!\big(\nu_{(p,t_i);s},\widetilde{\nu}_{(p,T);s}\big)^2 + 4 \Var_s\!\big(\nu_{(p,t_i);s}\big) + 4\Var_s\!\big(\widetilde{\nu}_{(p,T);s}\big) \\
&\leq (2H_0+8H_n)(T-s) +4 d_{W_1}^{g(s)}\!\big(\nu_{(p,t_i);s},\widetilde{\nu}_{(p,T);s}\big)^2.
\end{align*}
Letting $i\to\infty$, the last term vanishes and we obtain the lemma for $H:=2H_0+8H_n$.
\end{proof}

The last ingredient needed for the proof of our main theorem is the following rigidity result.

\begin{lemma}[Tangent metric cones are Ricci flat]\label{lemma.Ricciflat}
In the situation described above, suppose the scalar curvature $\Sc^{\infty}$ vanishes at a point $x_0$ in the regular part $\mathcal{R}$ of $\mathcal{X}^\infty$, that is $\Sc^{\infty}(x_0)=0$. Then $\Ric \equiv 0$ on $\mathcal{R}$, hence the tangent flow $\mathcal{X}^\infty$ is static and Theorem~2.16 from \cite{Bamler3} applies to it.
\end{lemma}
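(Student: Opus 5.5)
The plan is to use the strong maximum principle for the scalar curvature on the regular part of the limit shrinker. On $\mathcal{R}$ the limit metric $g^\infty$ is a smooth Ricci flow, since smooth convergence on $\mathcal{R}$ (Remark~\ref{remark.smoothconv}) passes the flow equation to the limit. Hence on $\mathcal{R}$ the scalar curvature satisfies
\begin{equation*}
\partial_t \Sc^\infty = \Delta \Sc^\infty + 2\abs{\Ric^\infty}^2 ,
\end{equation*}
and by Theorem~\ref{thm.Bamlerblowup2} we have $\Sc^\infty\geq 0$ on $\mathcal{R}$. Write $x_0=(y_0,t_0)$ under the identification $\mathcal{R}=\mathcal{R}_X\times(-\infty,0)$.

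I would run the strong maximum principle locally. Take a small parabolic neighbourhood $B\times(t_0-\varepsilon,t_0]$ of $x_0$ inside $\mathcal{R}$ on which the metric is smooth. Since $\Sc^\infty\geq 0$ is a supersolution of the heat equation attaining the interior minimum $0$ at $(y_0,t_0)$, it vanishes on a backward neighbourhood of $x_0$. Feeding this back into the evolution equation gives $\abs{\Ric^\infty}^2 = 0$ there.

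Next I spread this to all of $\mathcal{R}$. One route uses the soliton structure: $g^\infty_t = \abs{t} g_X$ on $\mathcal{R}_X$, so $\Sc^\infty(\cdot,t) = \abs{t}^{-1}\Sc_X$. Vanishing at one time is then equivalent to vanishing at all times, and it suffices to show $\Sc_X\equiv0$ on $\mathcal{R}_X$.

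\begin{itemize}
\item \emph{Connectedness.} $\mathcal{R}_X$ is connected, since by Bamler's structure theory the singular set has codimension at least $4$ and does not disconnect. Hence the zero set of $\Sc_X$ is open by the local strong maximum principle applied around each zero.
\item \emph{Closedness.} The zero set is closed by continuity.
\end{itemize}

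The main obstacle is justifying connectedness of $\mathcal{R}_X$ and citing the precise statement. A cleaner alternative avoids it: Theorem~\ref{thm.Bamlerblowup2} states that either $X$ is a metric cone or $\Sc>0$ on all of $\mathcal{R}_X$. Since $\Sc_X(y_0)=0$, $X$ must be a metric cone. For a shrinker cone with $f = d^2/4$ up to scaling, the identity $\nabla^2 f = \frac{1}{2\abs t} g$ holds on the regular part of a metric cone, so the soliton equation forces $\Ric\equiv 0$ on $\mathcal{R}$. I would use this dichotomy as the primary argument, keeping the maximum principle as a supporting check.

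Finally, Ricci-flatness on $\mathcal{R}$ means $\partial_t g^\infty = 0$ in the unscaled sense modulo the cone scaling. Thus the flow is static, with $X$ a Ricci-flat cone and $d_t=\sqrt{\abs t}\,d$ consistent with cone dilation. The hypotheses of Theorem~2.16 in \cite{Bamler3} on static limits are then met.
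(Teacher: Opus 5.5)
\textbf{Gap in the route you make primary.} The dichotomy in Theorem~\ref{thm.Bamlerblowup2} does give you that $(X,d)$ is a metric cone. The next step, however, assumes what is to be proved. You argue that ``$f=d^2/4$ up to scaling, so $\nabla^2 f=\frac{1}{2\abs{t}}g$, and the soliton equation forces $\Ric\equiv 0$''. But by the soliton equation, $\nabla^2 f=\frac{1}{2\abs{t}}g^\infty_t$ on $\mathcal{R}$ is literally equivalent to $\Ric^\infty\equiv 0$. The metric-cone property of $(X,d)$ says nothing about $f$, which is defined through the limit conjugate heat flow $\mu^\infty_t=(4\pi\abs{t})^{-n/2}e^{-f}\,dV$. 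The formula $f=d^2(o,\cdot)/4\abs{t}$ belongs to the Ricci-flat cone picture you are trying to reach. There is a second missing input: ``$(X,d)$ is a metric cone'' is a statement about the metric space, and you would also need that it induces a Riemannian cone structure $g_X=dr^2+r^2g_L$ on $\mathcal{R}_X$. Granting that structure, the route could be repaired without knowing $f$. At $t=-1$, set $h:=f-r^2/4$, so that $\nabla^2h=-\Ric$. Then $\Ric(\partial_r,\cdot)=0$ forces $h=a(\theta)r+b$ with $b$ locally constant, and the link components read $\Ric_L-(n-2)g_L=-r(\nabla_L^2a+a\,g_L)$ for all $r$, so both sides vanish. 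That is an additional argument, though, and it rests on the unproved cone structure of $g_X$.

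\textbf{The maximum-principle route is a complete proof and should be your main argument.} It is close to the paper's proof, with one genuine difference.
\begin{itemize}
\item The paper applies the elliptic strong minimum principle to Zhang's identity $\Delta_f\Sc^\infty=\Sc^\infty-2\abs{\Ric^\infty}^2$ on a ball in the slice; this is equivalent to your parabolic version.
\item The paper then spreads the vanishing from that ball to the whole component of $\mathcal{R}_{t_0}$ using Kotschwar's real-analyticity of $g^\infty_{t_0}$ and the identity theorem.
\item Your open--closed argument makes analyticity unnecessary. The differential inequality holds on all of $\mathcal{R}$, so the strong minimum principle applied at \emph{every} zero shows that the zero set of $\Sc^\infty(\cdot,t_0)$ is open in $\mathcal{R}_{t_0}$. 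It is closed by continuity.
\end{itemize}
Both proofs then need connectedness of $\mathcal{R}_X$, use self-similarity to pass to all times, and read off $\abs{\Ric^\infty}^2=0$ from the evolution (or soliton) identity.

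On connectedness: the paper simply asserts it. A codimension bound on $\mathcal{S}$ does not obviously give it by itself. What does give it is the length-metric form of the fact mentioned in the remark after Lemma~\ref{GH-conv}: as in Bamler's structure theory, $(\mathcal{X}_t,d_t)$ is the completion of $\mathcal{R}_t$ with the \emph{length metric} of $g^\infty_t$. Combined with the finiteness of $d_t=\sqrt{\abs{t}}\,d$, this forces $\mathcal{R}_t$ to be path-connected.

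Finally, your phrase ``static modulo the cone scaling'' is muddled. It suffices to note that $\partial_{\mathfrak t}g^\infty=-2\Ric^\infty=0$ on $\mathcal{R}$.
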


\begin{proof}
Let $x_0$ be a point in $\mathcal{R}$ where $\Sc^{\infty}(x_0)=0$. Because $\mathcal{R}$ is open, there exists a radius $r_0$ such that the $*$-parabolic ball $\mathcal{P}^*(x_0;r_0) \subset \mathcal{R}$ (these form a basis of the natural topology by \cite{Bamler2}). In particular, the curvature is bounded there, and we can fit a standard parabolic cylinder $\mathcal{P}(x_0;r_1) \subset \mathcal{P}^*(x_0;r_0) \subset \mathcal{R}$. Therefore, as $\mathcal{P}(x_0;r_1)$ is a product domain, we may restrict our attention to the time-slice $t_0 := \mathfrak{t}(x_0)$ and write $x_0 = (p_0, t_0)$. By the soliton equation (which holds on the regular part $\mathcal{R}$), adapting the classical argument of Zhang~\cite[Proposition~2.2]{Zh09}, we have
\begin{equation}\label{eq.evolR}
\Delta \Sc^\infty = \langle \nabla f, \nabla \Sc^\infty \rangle + \Sc^\infty - 2|\Ric^\infty|^2 \leq \langle \nabla f, \nabla \Sc^\infty \rangle + \Sc^\infty.
\end{equation}
Since $\Sc^\infty \geq 0$ and $\Sc^\infty(p_0, t_0) = 0$, the strong minimum principle implies that $\Sc^\infty(\cdot, t_0) \equiv 0$ on $B_{t_0}(x_0, r_1)$.

Since the flow is smooth on $\mathcal{R}$ and has bounded curvature on the above parabolic cylinder, Corollary~1.3 of \cite{Kot13} implies that the metric $g^\infty_{t_0}$ is real-analytic on $\mathcal{R}_{t_0}$. Consequently, the scalar curvature $\Sc^\infty(\cdot,t_0)$ is a real-analytic function on $\mathcal{R}_{t_0}$ and because it vanishes on the non-empty open subset $B_{t_0}(p_0,r_1)$, it must vanish on the entire connected component of $\mathcal{R}_{t_0}$ containing $x_0$ by the identity theorem for real-analytic functions.

Now using the structure from Theorem~\ref{thm.Bamlerblowup2}, in particular the identification
\begin{equation*}
\mathcal{R}=\mathcal{R}_X\times (-\infty,0), \qquad (\mathcal{R}_t,g^\infty_t)=(\mathcal{R}_X\times\{t\},|t|g_X).
\end{equation*}
we conclude that $\mathcal{R}_{t_0}=\mathcal{R}_X \times \set{t_0}$ is connected, so the scalar curvature vanishes on the entire $\mathcal{R}_{t_0}$. Furthermore, the self-similarity of the flow implies vanishing of the scalar curvature on all of $\mathcal{R}$. Plugging this back into \eqref{eq.evolR}, we conclude
\begin{equation}
0 = \Delta_f \Sc^\infty = -2|\Ric^\infty|^2,
\end{equation}
hence $\Ric^\infty \equiv 0$ on $\mathcal{R}$. This finishes the proof.
\end{proof}

\begin{remark}
Notice that we cannot directly adapt Zhang's classical proof that establishes $\Sc^\infty \geq 0$ on Ricci shrinkers from equation \eqref{eq.evolR}, because his argument relies on cutting off the scalar curvature on larger and larger regions while remaining in the regular part of the shrinker. However, since non-negativity of the scalar curvature is guaranteed by Theorem~\ref{thm.Bamlerblowup2}, we can argue in a fixed local neighbourhood as shown above.

Also note that the lack of any global bound on the curvature in the assumptions of Corollary 1.3 of \cite{Kot13} is crucial in the proof above, as no such bound can exist for a general singular time-slice $\mathcal{X}_{t_0}$. Compare also with Theorem 1 in \cite{Kot15}. 

We conjecture that any $\mathbb{F}$-limit of smooth complete Ricci flows satisfying \eqref{eq.bddcurv} is actually space-time analytic on the regular part $\mathcal{R}$, but this conjecture lies beyond the scope of this article. Such a space-time analyticity result would rule out wild (but smooth) behaviours of the Ricci flow on the regular part $\mathcal{R}$. In principle, without space-time analyticity a smooth Ricci flow in consideration, may be Ricci-flat (or even completely flat) on a ball $B_{t_0}(x_0, r_1)$ as considered above, but evolve non-trivially at later times as described for example in the beautiful work of Topping \cite{Top10}. Luckily, self-similarity comes in to save us from such a behaviour in our case.
\end{remark}

We are now ready to prove our first main result that the scalar curvature blows up at any Type~I singular point.

\begin{proof}[Proof of Theorem~\ref{mainthm}]
Assume towards a contradiction that there exists an increasing sequence of times $t_i \nearrow T$
such that
\begin{equation}\label{eq.scalar-small-seq}
(T-t_i)\Sc(p,t_i)\to 0.
\end{equation}
Set $\tau_i:=T-t_i$ and define the rescaled flows
\begin{equation*}
g^i(t):=\tau_i^{-1}g(T+\tau_i t), \qquad t\in[-\tau_i^{-1}T,0).
\end{equation*}
Moreover, let $\mu_t^i:=\widetilde{\nu}_{(p,T);T+\tau_i t}$, so that $(M,(g^i(t))_{[-\tau_i^{-1}T,0)},(\mu_t^i)_{[-\tau_i^{-1}T,0)})$ is a sequence of metric flow pairs as in Theorem~\ref{thm.Bamlerblowup1}. After passing to a subsequence, we may assume that these metric flow pairs $\mathbb{F}$-converge within a correspondence $\mathfrak C$ to a tangent flow $(\mathcal{X}^\infty,(\mu_t^\infty)_{t\in(-\infty,0)})$. By Lemma~\ref{lemma.continuity}, the tangent flow $\mathcal{X}^\infty$ is continuous at every time $t<0$, and hence the $\mathbb{F}$-convergence is time-wise at every time $t<0$.

In the following we \textit{fix} some time $t_0 \in [-1,0)$. By Lemma~\ref{lemma.Hcenter}, there exists $H>0$ depending only on $C_I(p)$ and $n$ such that $(p,t_0)$ is an $H$-center of $(p,0)$ in every rescaled flow $(M,g^i(t))$ for all $i$ sufficiently large. Applying Lemma~\ref{GH-conv} at time $t_0$, we therefore obtain, after passing to a further subsequence, pointed Gromov--Hausdorff convergence
\begin{equation}\label{eq.pGH-main}
(M,d_{g^i(t_0)},p)\to (\mathcal{X}_{t_0}^\infty,d_{t_0}^\infty,z_{t_0}^\infty)
\end{equation}
for some point $z_{t_0}^\infty\in \mathcal{X}_{t_0}^\infty$.

Since $p\in \Sigma_I$, \eqref{eq.singularstrong} and \eqref{eq.typeI} imply that there exists a constant $1 \leq C_I<\infty$ such that we have $1 \leq (T-t)\, r_{\Rm}^{-2}(p,t)\leq  2 C_I$ for all $t$ sufficiently close to $T$, say $t\in [\widetilde{T},T)$. For the rescaled flows, this means that
\begin{equation}\label{eq.singularlowerupper}
\frac{1}{|t|} \leq r_{\Rm,g^i}^{-2}(p,t) \leq \frac{2C_I}{|t|},
\end{equation}
for all $t\in [(\widetilde{T}-T)\tau_i^{-1},0)$. Note that $(\widetilde{T}-T)\tau_i^{-1} \to -\infty$ and therefore \eqref{eq.singularlowerupper} holds in particular for all $t \in [-1,0)$ whenever $i$ is sufficiently large. In particular, for our $t_0$, we find for $i$ sufficiently large,
\begin{equation*}
r_{\Rm,g^i}(p,t_0)\geq \Big( \frac{2 C_I}{|t_0|} \Big)^{-1/2}>0.
\end{equation*}
Hence the basepoints stay in a uniform size region with uniform curvature bounds, so the pointed Gromov--Hausdorff convergence at time $t_0$ upgrades to smooth Cheeger--Gromov convergence in a neighbourhood of the limit basepoint $z^{\infty}_{t_0}\in \mathcal{X}_{t_0}^\infty$. In particular, $z^{\infty}_{t_0}$ lies in the regular part $\mathcal{R}_{t_0}$ of the limit.

Let us now consider $t_0=-1$. By \eqref{eq.scalar-small-seq}, we have $\Sc_{g^i}(p,-1)=\tau_i \Sc(p,t_i)\to 0$. By the smooth convergence around $z_{-1}^\infty$, this implies $\Sc^\infty(z_{-1}^\infty)=0$. Hence, by Lemma \ref{lemma.Ricciflat}, we get $\Ric^\infty\equiv 0$, and therefore the tangent flow $\mathcal{X}^\infty$ is static and Theorem~2.16 of \cite{Bamler3} applies. 
This means that we have an identification $\mathcal{X}^\infty_{<0} = X \times (-\infty,0)$ as well as 
\begin{equation}\label{eq.identification216singular}
\mathcal{R} = \mathcal{R}_X \times (-\infty,0), \qquad (\mathcal{R}_t,g_t^{\infty})=(\mathcal{R}_X,g_X),
\end{equation} 
for a fixed time-independent Riemannian metric $g_X$ on the regular factor. In particular, the spacetime convergence in Remark~\ref{remark.smoothconv} is time-preserving on the regular part, so the limit point corresponding to the base point $p$ is represented by a single spatial point $z_0\in R_X$ for every time slice. In other words, there exists $z_0 \in \mathcal{R}_X$ such that $z^\infty_{t_0} = (z_0,t_0)$ under the identification \eqref{eq.identification216singular} for all $t_0\in [-1,0)$.

The smooth Cheeger-Gromov convergence allows us to pass \eqref{eq.singularlowerupper} to the limit to conclude that
\begin{equation}\label{eq.contradictionestsingular}
\frac{1}{|t_0|} \leq r_{\Rm^\infty}^{-2}(z_0,t_0) \leq \frac{2C_I}{|t_0|},
\end{equation}
for all $t_0 \in [-1,0)$. But because $g_t^\infty=g_X$ is independent of $t$, the quantity $r_{\Rm^\infty}^{-2}(z_0,t_0)$ is independent of $t_0$ as well, which is impossible since the left-hand side of \eqref{eq.contradictionestsingular} blows up as $t_0\to 0$ while the right-hand side guarantees the finiteness of this constant by fixing any $t_0\in [-1,0)$. This contradiction completes the proof.
\end{proof}

\section{Ancient Ricci Flows}\label{sec.ancient}

Let us now focus on ancient Ricci flows $(M,g(t))_{t\in(-\infty,0]}$ that are complete at all times and have bounded curvature on every compact time interval as in \eqref{eq.bddcurv}. We start by proving Proposition~\ref{prop.ancient}.

\begin{proof}[Proof of Proposition \ref{prop.ancient}]
Suppose towards a contradiction that for some $p\in M$ and a sequence $t_j\rightarrow -\infty$ we have $|t_j| r_{\Rm}^{-2}(p,t_j)\leq 1$. This means that we have the Riemann curvature bound 
\begin{equation}\label{eq.110Rmbound}
|\Rm|(q,s) \leq \frac{1}{|t_j|}, \qquad \forall (q,s)\in B_{g(t_j)}\Big(p,\sqrt{\abs{t_j}}\Big) \times (2 t_j,0).
\end{equation}
We now want to place a ball measured with the metric $g(-1)$ into this cylinder. To do so, note that \eqref{eq.110Rmbound} implies that 
\begin{equation}\label{eq.110Ricbound}
\Ric (q,s) \leq \frac{(n-1)}{|t_j|} g(q,s), \qquad \forall (q,s)\in B_{g(t_j)}\Big(p,\sqrt{\abs{t_j}}\Big) \times (2 t_j,0).
\end{equation}
By the standard distance distortion estimates, we therefore must have
\begin{equation}
B_{g(t_j)}\Big(p,\sqrt{\abs{t_j}}\Big) \supseteq B_{g(-1)}\Big(p,\sqrt{\abs{t_j}} e^{-\int_{t_j}^{-1} \tfrac{n-1}{\abs{t_j}} ds }\Big)=B_{g(-1)}\Big(p,c\sqrt{\abs{t_j}}\Big),
\end{equation}
where we have set $c:= e^{-(n-1)}>0$. Hence, on $B_{g(-1)}(p,c\sqrt{|t_j|}) \times \{-1 \}$ we have $\abs{\Rm (\cdot,-1)}\le\tfrac{1}{|t_j|}$, and letting $j \rightarrow +\infty$ we deduce that $(M,g(-1))$ is flat. This gives the flatness of the entire flow by uniqueness in the category of complete Ricci flows with bounded curvature on compact time intervals, and hence the desired contradiction. Since the proof relied only on classical distance distortion estimates, available once we have the Ricci bound \eqref{eq.110Ricbound} that also follows from a bound on the Ricci scale, the same argument can be adapted to prove the statement for the Ricci scale instead.
\end{proof}

Next, we show an analogue of Lemma \ref{lemma.Hcenter} adapted to ancient Type~I points.

\begin{lemma}[$H$-center:~Ancient case]\label{lemma.Hcenterancient}
If $p \in A_I$ is an ancient Type~I point, then there exist a positive constant $H$ and a time $T_0 \in (-\infty,0)$, such that the following holds: for all $s\in (-\infty,T_0]$, the point $(p,s)$ is an $H$-center of $(p,0)$ in the sense that
\begin{equation*}
\Var_s\!\big(\nu_{(p,0);s},\delta_p\big)\leq H\,|s|.
\end{equation*}
Here $H$ may depend on $n$, $C_I(p)$, $T_0$, as well as a bound for $\Sc(p,t)$ on the compact time interval $[T_0,0]$, but it is independent of $s\leq T_0$.
\end{lemma}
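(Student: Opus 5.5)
The plan is to mirror the proof of Lemma~\ref{lemma.Hcenter}, now with the conjugate heat kernel based at the regular point $(p,0)$. No limiting kernel is needed, so the final triangle-inequality step disappears. The key intermediate target is a uniform bound on Perelman's reduced length $\ell_{(p,0)}(p,s)\leq C$ for all $s\leq T_0$, which will give the on-diagonal lower bound $K(p,0;p,s)\geq e^{-C}(4\pi|s|)^{-n/2}$.

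First, from \eqref{eq.ancienttypeI} and the trivial bound $\Sc\leq n(n-1)\abs{\Rm}\leq n(n-1) r_{\Rm}^{-2}$, I would choose $T_0<0$ such that $|\tau|\, r_{\Rm}^{-2}(p,\tau)\leq 2C_I$ for $\tau\leq T_0$. This gives $\Sc(p,\tau)\leq C_1/|\tau|$ on $(-\infty,T_0]$. On $[T_0,0]$ the bounded curvature assumption \eqref{eq.bddcurv} gives $\Sc(p,\tau)\leq R_0$. Next I would estimate the reduced length from $(p,0)$ to $(p,s)$ using the constant path. Writing $\sigma=-\tau$ for backward time,
\begin{equation*}
\ell_{(p,0)}(p,s)\leq \frac{1}{2\sqrt{|s|}}\int_s^0\sqrt{|\tau|}\,\Sc(p,\tau)\,d\tau \leq \frac{1}{2\sqrt{|s|}}\Big(\int_{s}^{T_0} C_1|\tau|^{-1/2}\,d\tau + R_0\int_{T_0}^0\sqrt{|\tau|}\,d\tau\Big).
\end{equation*}
The first piece is at most $C_1$. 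The second is $\tfrac{R_0}{3}|T_0|^{3/2}|s|^{-1/2}\leq \tfrac{R_0}{3}|T_0|$ for $s\leq T_0$. Hence $\ell\leq C(n,C_I,T_0,R_0)$, which explains the stated dependence of $H$. Since the scalar curvature may be negative at some times, I will keep the lower-bound side harmless by only using upper bounds on $\Sc$, as the estimate requires.

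Then Perelman's Harnack inequality \cite[Section~9]{Per}, $f(p,s)\leq \ell_{(p,0)}(p,s)$, applied to $d\nu_{(p,0);s}=(4\pi|s|)^{-n/2}e^{-f}dV_{g(s)}$, yields the on-diagonal lower bound. Finally I would invoke the argument of Proposition~4.4 in \cite{LW2}, as in Lemma~\ref{lemma.Hcenter}. The lower bound, together with Bamler's Gaussian upper bound \cite[Theorem~7.2]{Bamler1} and the $H_n$-concentration (Proposition~\ref{RFmetricflow}), forces $(p,s)$ to lie within distance $C\sqrt{|s|}$ of an $H_n$-center $z_s$. Indeed, the Gaussian upper bound centered at $z_s$ together with $K(p,0;p,s)\gtrsim|s|^{-n/2}$ gives $d_s(p,z_s)^2\leq C|s|$. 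Then
\begin{equation*}
\Var_s(\nu_{(p,0);s},\delta_p)\leq 2\Var_s(\nu_{(p,0);s},\delta_{z_s})+2d_s^2(z_s,p)\leq H|s|.
\end{equation*}

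The main obstacle I expect is this last step in the noncompact ancient setting. Bamler's upper bound involves the Nash entropy $\mathcal N_{(p,0)}(|s|)$ as a prefactor, $K\leq C\exp(-\mathcal N)|s|^{-n/2}\exp(-d^2/(C|s|))$. If the entropy tends to $-\infty$ as $s\to-\infty$, the bound degenerates. I would handle this by noting that the ratio of lower and upper bounds only involves $e^{\mathcal N}\leq 1$ in the favourable direction: the lower bound $e^{-C}|s|^{-n/2}\leq K\leq C e^{-\mathcal N}|s|^{-n/2}e^{-d^2/C|s|}$. This still needs a lower bound on $\mathcal N$, so I would try to obtain one from $\ell\leq C$ via Perelman's reduced volume comparison (the reduced volume at scale $|s|$ controls the Nash entropy from below). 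This is exactly where the Li--Wang argument is used, and I would follow it verbatim.
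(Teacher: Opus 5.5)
Your outline is the paper's proof step by step. You bound $\ell_{(p,0)}(p,s)\le C_1+\tfrac{R_0}{3}|T_0|$ via the constant path, splitting at $T_0$. Perelman's Harnack inequality then gives $K(p,0;p,s)\ge e^{-C}(4\pi|s|)^{-n/2}$. Finally you combine this with Bamler's Gaussian upper bound (Theorem~7.2 of \cite{Bamler1}), following Li--Wang. The only place you go beyond the paper is your last paragraph, and there the proposed repair does not work.

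\textbf{The Nash entropy factor.} You are right that Theorem~7.2 carries the factor $e^{-\mathcal N_{(p,0)}(|s|)}$. Since $\mathcal N\le 0$, this factor is $\ge 1$, so it works \emph{against} you, not in the favourable direction. Comparing it with the on-diagonal bound only gives $d_s^2(p,z_s)\le C|s|\bigl(C-\mathcal N_{(p,0)}(|s|)\bigr)$. A uniform lower bound $\mathcal N_{(p,0)}(\tau)\ge -Y$ is therefore genuinely needed.

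\textbf{Why $\ell\le C$ cannot supply it.} A pointwise bound on $\ell$ bounds the reduced volume from below only if $\ell$ stays bounded on a set of $g(s)$-volume comparable to $|s|^{n/2}$. That is volume non-collapsing at scale $\sqrt{|s|}$, which the Type~I condition at $p$ does not provide. Concretely, take $g(t)=2(1-t)g_{S^2}+g_{S^1}$ on $S^2\times S^1$. Every point is an ancient Type~I point, with $|t|r_{\Rm}^{-2}\to 2$, and $\ell_{(p,0)}(p,s)\le 1$. Yet the circle factor contributes $\approx-\tfrac12\log|s|$ to $\mathcal N_{(p,0)}(|s|)$, so $\mathcal N\to-\infty$. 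The Bamler--Li--Wang comparison then only yields $d_s^2(p,z_s)\lesssim|s|\log|s|$, even though the lemma's conclusion does hold there because each factor is compact at the relevant scale.

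\textbf{What is actually missing.} In Li--Wang's shrinker setting the entropy bound comes for free from the shrinker's entropy. The paper cites their argument without addressing this point, so it rests on the same unstated input. To close the step you must do one of two things:
\begin{enumerate}[i)]
\item Add the hypothesis $\mathcal N_{(p,0)}(\tau)\ge -Y$ for all $\tau>0$, and let $H$ depend on $Y$. This is the non-collapsing condition that Bamler's structure theory for the tangent flow at infinity needs in the next step anyway.
\item Locate the $H_n$-center by an argument that does not pass through Theorem~7.2.
\end{enumerate}
The other constant in Theorem~7.2, $C(R_{\min}|s|,\varepsilon)$, is harmless here, since complete ancient flows have $\Sc\ge 0$.
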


\begin{proof}
The proof is a simple adaptation of the one of Lemma \ref{lemma.Hcenter} given before, but here we can directly work with conjugate heat kernels based at the regular time $0$. Since $p \in A_I$, there exist a constant $C_1<\infty$ (depending only on $C_I(p)$ and $n$) and a time $T_0 \in (-\infty,0)$, such that we have
\begin{equation*}
R(p,t)\leq \frac{C_1}{|t|}, \qquad \forall t\in (-\infty,T_0].
\end{equation*}
Moreover, by assumption \eqref{eq.bddcurv}, there exists $C_2<\infty$ (depending on $T_0$) such that
\begin{equation*}
R(p,t)\leq C_2, \qquad \forall t\in [T_0,0].
\end{equation*}
Combining these two bounds, we can estimate Perelman's reduced length from $(p,0)$ to $(p,s)$ by means of the constant path $\gamma(\tau)\equiv p$, obtaining
\begin{align*}
\ell_{(p,0)}(p,s) &\leq \frac{1}{2\sqrt{|s|}}\int_s^{0}\sqrt{{-\tau}}\,R(p,\tau)\,d\tau \\
&\leq \frac{C_1}{2\sqrt{|s|}}\int_s^{T_0}\frac{1}{\sqrt{|\tau|}}\,d\tau + \frac{C_2}{2\sqrt{|s|}}\int_{T_0}^0\sqrt{|\tau|}\,d\tau \\
&= \frac{C_1}{\sqrt{|s|}}(\sqrt{\abs{s}}-\sqrt{\abs{T_0}}) + \frac{C_2}{3\sqrt{|s|}}\abs{T_0}^{3/2} \\
&\leq C_1 + \frac{C_2}{3}\abs{T_0} =: C.
\end{align*}
Note that this constant depends on $C_1$, $C_2$, and $T_0$, but is independent of $s \leq T_0$. Hence, writing
\begin{equation*}
d\nu_{(p,0);s}=K(p,0;\cdot,s)\,dV_{g(s)}
=(4\pi |s|)^{-\frac n2}e^{-f(\cdot,s)}\,dV_{g(s)},
\end{equation*}
Perelman's differential Harnack inequality for the conjugate heat kernel
\cite[Section~9]{Per} yields
\begin{equation*}
f(p,s)\leq \ell_{(p,0)}(p,s)\leq C,
\end{equation*}
giving once again an on-diagonal lower bound of the form
\begin{equation*}
K(p,0;p,s)\geq e^{-C}(4\pi |s|)^{-\frac n2}.
\end{equation*}
As before, by the argument in Proposition~4.4 of \cite{LW2}, such an on-diagonal lower bound, together with Bamler's heat kernel upper bound from Theorem 7.2 in \cite{Bamler1}, implies that
$(p,s)$ is an $H$-center of $(p,0)$ for some constant
$H=H(n,C_1,C_2,T_0)$, that is,
\begin{equation*}
\Var_s\!\big(\nu_{(p,0);s},\delta_p\big)\leq H\,|s|.
\end{equation*}
This finishes the proof.
\end{proof}

We are ready to prove our second main result, Theorem \ref{ancientthm}, regarding the scalar curvature lower bound at ancient Type I points.

\begin{proof}[Proof of Theorem \ref{ancientthm}]
Assume towards a contradiction that there exists a decreasing sequence of times $(t_i)$ diverging to $-\infty$ such that
\begin{equation}\label{eq.scalar-small-ancient}
|t_i|\Sc(p,t_i) \rightarrow 0.
\end{equation}
Set $\tau_i:=|t_i|$ and define the rescaled flows
\begin{equation}
g^i(t):=\tau_i^{-1}g(\tau_i t), \quad t \in (-\infty,0].
\end{equation}
Moreover, let $\mu_t^i:=\nu_{(p,0);\tau_i t}$, so that $(M,(g^i(t))_{(-\infty,0]},(\mu_t^i)_{(-\infty,0]})$ is a sequence of metric flow pairs as in Theorem~\ref{thm.Bamlerinfinity}. After passing to a subsequence, we may assume that these metric flow pairs $\mathbb{F}$-converge within a correspondence $\mathfrak C$ to a tangent flow at infinity $(\mathcal{X}^\infty,(\mu_t^\infty)_{t\in(-\infty,0]})$. By Lemma~\ref{lemma.continuity}, the tangent flow $\mathcal{X}^\infty$ is continuous at every time $t<0$, and hence the $\mathbb{F}$-convergence is time-wise at every time $t<0$. 

In the following, we \emph{fix} some $t_0\leq -1$. By Lemma~\ref{lemma.Hcenterancient}, there exists $H>0$ such that $(p,t_0)$ is an $H$-center of $(p,0)$ in every rescaled flow $(M,g^i(t))$ for all $i$ sufficiently large (and with $H$ independent of $i$). Applying Lemma~\ref{GH-conv} at time $t_0$, we therefore obtain, after passing to a further subsequence, pointed Gromov--Hausdorff convergence
\begin{equation}\label{eq.pGH-ancient}
(M,d_{g^i(t_0)},p)\to (\mathcal{X}_{t_0}^\infty,d_{t_0}^\infty,z_{t_0}^\infty)
\end{equation}
for some point $z_{t_0}^\infty\in \mathcal{X}_{t_0}^\infty$.

Since $p\in A_I$, \eqref{eq.ancienttypeI} and \eqref{eq.strongancient} imply that there exists a constant $1 \leq C_I<\infty$ such that we have $1 \leq |t|\, r_{\Rm}^{-2}(p,t)\leq 2C_I$ for all $t$ negative enough, say $t\leq \widetilde{T}$. For the rescaled flows, this means that
\begin{equation}\label{eq.111lowerupper}
\frac{1}{|t|} \leq r_{\Rm,g^i}^{-2}(p,t) \leq \frac{2C_I}{|t|},
\end{equation}
for all $t\leq \abs{t_i}^{-1}\widetilde{T}$. Note that $\abs{t_i}^{-1}\widetilde{T} \to 0$ and therefore \eqref{eq.111lowerupper} holds in particular for all $t \leq -1$ whenever $i$ is sufficiently large. In particular, for our $t_0 \leq 1$, we find for $i$ sufficiently large
\begin{equation*}
r_{\Rm,g^i}(p,t_0)\geq \Big(\frac{2C_I}{\abs{t_0}}\Big)^{-1/2}>0.
\end{equation*}
Hence the basepoints $(p,t_0)$ stay in a uniform size region with uniform curvature bounds, so the pointed Gromov--Hausdorff convergence at time $t_0$ upgrades to smooth Cheeger--Gromov convergence in a neighbourhood of the limit basepoint $z^{\infty}_{t_0}\in \mathcal{X}_{t_0}^\infty$. In particular, $z^{\infty}_{t_0}$ lies in the regular part $\mathcal{R}_{t_0}$ of the limit.

Let us now consider $t_0=-1$. By \eqref{eq.scalar-small-ancient}, we have $\Sc_{g^i}(p,-1)=\tau_i \Sc(p,t_i)\to 0$. By the smooth convergence around $z_{-1}^\infty$, this implies $\Sc^\infty(z_{-1}^\infty)=0$ and hence, by Lemma~\ref{lemma.Ricciflat}, we get $\Ric^\infty\equiv 0$ on the regular part. Consequently, the tangent flow $\mathcal{X}^\infty$ is static and Theorem~2.16 of \cite{Bamler3} applies. This means that we have an identification $\mathcal{X}^\infty_{<0} = X \times (-\infty,0)$ as well as 
\begin{equation}\label{eq.identification216}
\mathcal{R} = \mathcal{R}_X \times (-\infty,0), \qquad (\mathcal{R}_t,g_t^{\infty})=(\mathcal{R}_X,g_X),
\end{equation} 
for a fixed time-independent Riemannian metric $g_X$ on the regular factor. In particular, the spacetime convergence in Remark~\ref{remark.smoothconv} is time-preserving on the regular part, so the limit point corresponding to the base point $p$ is represented by a single spatial point $z_0\in R_X$ for every time slice. In other words, there exists $z_0 \in \mathcal{R}_X$ such that $z^\infty_{t_0} = (z_0,t_0)$ under the identification \eqref{eq.identification216} for all $t_0\leq -1$.

The smooth Cheeger-Gromov convergence allows us to pass \eqref{eq.111lowerupper} to the limit to conclude that
\begin{equation}\label{eq.contradictionest}
\frac{1}{|t_0|} \leq r_{\Rm^\infty}^{-2}(z_0,t_0) \leq \frac{2C_I}{|t_0|},
\end{equation}
for all $t_0 \leq -1$. But because $g_t^\infty=g_X$ is independent of $t$, the quantity $r_{\Rm^\infty}^{-2}(z_0,t_0)$ is independent of $t_0$ as well, which is impossible since the right-hand side of \eqref{eq.contradictionest} tends to $0$ as $t_0\to -\infty$ while the left-hand side is bounded below by the positive constant after fixing any $t_0\leq -1$. This contradiction completes the proof.
\end{proof}

\begin{remark}
Let us remark that, in the complete and bounded curvature case, the only possibility for a shrinking soliton to be Ricci flat, is for it to be Riemann flat, and actually the Gaussian soliton. Dropping one of the two assumptions, so for example considering singular Ricci solitons as those appearing in the proof above, we may have Ricci flatness without the soliton being Gaussian.

For example, if one considers a non-flat Einstein manifold $(L^{n-1},g_L)$, with $\Ric_L=(n-2)g_L$, then the singular cone $(C(L),g)$, where $g:= dr^2 + r^2 g_L$, and $r$ is the distance from the tip, is Ricci flat, non flat, and admits a shrinking soliton structure with potential $f= r^2/4$.
\end{remark}

\makeatletter
\def\@listi{%
  \itemsep=0pt
  \parsep=1pt
  \topsep=1pt}
\makeatother
{\fontsize{10}{11}\selectfont

}
\printaddress

\end{document}